\numberwithin{equation}{section}
\theoremstyle{plain}
\newtheorem{theorem}{Theorem}[section]
\newtheorem{lemma}[theorem]{Lemma}
\newtheorem{proposition}[theorem]{Proposition}
\newtheorem{corollary}[theorem]{Corollary}
\theoremstyle{definition}
\newtheorem{conjecture}[theorem]{Conjecture}
\newtheorem{example}[theorem]{Example}
\theoremstyle{remark}
\newtheorem{remark}[theorem]{Remark}
\renewcommand{\O}{\mathcal{O}}
\newcommand{\R}{\mathbb{R}}
\newcommand{\Z}{\mathbb{Z}}
\newcommand{\Q}{\mathbb{Q}}
\newcommand{\C}{\mathbb{C}}
\newcommand{\D}{\mathbb{D}}
\newcommand{\A}{\mathbb{A}}
\newcommand{\reg}{{\mathrm{reg}}}
\renewcommand{\H}{\mathbb{H}}
\newcommand{\Mp}{{\mathrm{Mp}}}
\newcommand{\SL}{{\mathrm{SL}}}
\newcommand{\GL}{{\mathrm{GL}}}
\newcommand{\SO}{{\mathrm{SO}}}
\newcommand{\Oc}{\mathcal{O}}
\newcommand{\tr}{\mathrm{tr}}
\newcommand{\Res}{\mathrm{Res}}
\newcommand{\Nm}{\mathrm{Nm}}
\newcommand{\af}{\mathfrak{a}}
\newcommand{\mf}{\mathfrak{m}}
\newcommand{\Cl}{\mathrm{Cl}}
 \renewcommand{\k}{E}
\renewcommand{\Re}{\mathrm{Re}}
\renewcommand{\Im}{\mathrm{Im}}
\newcommand{\Gal}{\mathrm{Gal}}
\newcommand{\new}{\mathrm{new}}
\newcommand{\GSpin}{\mathrm{GSpin}}
\newcommand{\CT}{\mathrm{CT}}
\newcommand{\Sc}{\mathcal{S}}
\newcommand{\Rb}{\mathbb{R}}
\newcommand{\Zb}{\mathbb{Z}}
\newcommand{\Qb}{\mathbb{Q}}
\newcommand{\Cb}{\mathbb{C}}
\newcommand{\ebf}{{\mathbf{e}}}
\newcommand{\Hb}{\mathbb{H}}
\newcommand{\Nb}{\mathbb{N}}
\newcommand{\smat}[4]{\left(\begin{smallmatrix}
                 #1 & #2\\
                 #3 & #4
\end{smallmatrix}\right)}
\newcommand{\lp}{\left (}
\newcommand{\rp}{\right )}
\newcommand{\Ab}{\mathbb{A}}
\newcommand{\Db}{\mathbb{D}}
\newcommand{\f}{\mathrm{f}}
\newcommand{\vol}{\mathrm{Vol}}
\newcommand{\cha}{\mathrm{char}}
\newcommand{\Pet}{\mathrm{Pet}}
\newcommand{\ab}{\mathrm{ab}}
\newcommand{\Aut}{\mathrm{Aut}}
\newcommand{\Gm}{\mathbb{G}_m}
\newcommand{\bfrak}{\mathfrak{b}}
\newcommand{\df}{\mathfrak{d}}
\newcommand{\pf}{\mathfrak{p}}
\newcommand{\Qbar}{{\overline{\mathbb{Q}}}}
\renewcommand{\c}{\mathrm{c}}
\newcommand{\GQ}{\mathrm{Gal}(\overline{\mathbb{Q}}/\mathbb{Q})}
\newcommand{\cf}{\mathfrak{c}}
\newcommand{\vr}{\varrho}
\newcommand{\vt}{\vartheta}
\newcommand{\prin}{\mathrm{prin}}
\begin{document}
\title[Harmonic Maass forms associated with CM newforms]{Harmonic Maass forms associated with CM newforms}
\author{Stephan Ehlen, Yingkun Li and Markus Schwagenscheidt}

\address{Mathematisches Institut, University of Cologne, Weyertal 86-90, D-50931 Cologne, Germany}
\email{stephan.ehlen@math.uni-koeln.de}

\address{Fachbereich Mathematik,
Technische Universit\"at Darmstadt, Schlossgartenstrasse 7, D--64289
Darmstadt, Germany}
\email{li@mathematik.tu-darmstadt.de}

\address{ETH Z\"urich Mathematics Dept., R\"amistrasse 101, CH-8092 Z\"urich, Switzerland}
\email{mschwagen@ethz.ch}
\date{\today}

\begin{abstract}
In this paper, we use a regularized theta lifting to construct harmonic Maass forms corresponding to binary theta functions of weight $k \ge 2$ under the $\xi$-operator. As a result, we show that their holomorphic parts have algebraic Fourier coefficients, with compatible Galois action. As an application, we prove rationality properties of coefficients of harmonic Maaass forms corresponding to CM newforms, answering a question of Bruinier, Ono and Rhoades.\end{abstract}

\maketitle

 \makeatletter
 \providecommand\@dotsep{5}
 \def\listtodoname{List of Todos}
 \def\listoftodos{\@starttoc{tdo}\listtodoname}
 \makeatother

\tableofcontents

\section{Introduction}
Harmonic Maass forms, which are non-holomorphic generalizations of classical holomorphic modular forms, have been introduced by Bruinier and Funke \cite{bruinierfunke04} in connection with theta liftings, but also played a crucial role in Zwegers' investigation of Ramanujan's mock theta functions \cite{zwegers}. Since then, there has been a lot of interest in harmonic Maass forms, in particular concerning the Fourier coefficients of their holomorphic parts. For example, it turns out that several natural generating series, e.g. of traces of CM values or cycle integrals of modular functions, appear as holomorphic parts of harmonic Maass forms \cite{bruinierfunke06, dit}. Moreover, it seems that the transcendency of these holomorphic part coefficients is a deep question. For instance, in some cases their transcendency controls the non-vanishing of central values of the first derivative of $L$-functions of certain newforms \cite{bruinieronoheegnerdivisors}.

For a given harmonic Maass form, there are only two cases when all its holomorphic part Fourier coefficients are known to be algebraic, namely when its shadow is a linear combination of unary or higher weight binary theta series.
The first case contains the modular completions of the generating series of Hurwitz class numbers \cite{HZ76} and
the mock theta functions of Ramanujan studied by Zwegers \cite{zwegers}.
The second case includes newforms with complex multiplication \cite{bruinieronorhoades}, which we describe now.

Let $\k = \Q(\sqrt{-D}) \subset \Qbar \subset \Cb$ be an imaginary quadratic field.
Given an algebraic Hecke character $\vr$ of weight $k \ge 2$
and modulus $\mf \subset \Oc_\k$, one can associate 
a newform $\vartheta_\vr \in S_{k}^\new(N,\chi)$ of weight $k$, level $N = D \Nm(\mf)$ with some Nebentypus $\chi$ (see \eqref{eq:vartheta}).
In \cite{bruinieronorhoades}, Bruinier, Ono and Rhoades studied the algebraicity properties of a harmonic Maass form $\tilde\vt_\vr \in H_{2-k}(N,\overline{\chi})$ such that $\xi_{2-k}(\tilde\vt_\vr) = \vt_\vr/\|\vartheta_\vr\|^2_\Pet$.
Though such a harmonic Maass form is not unique, they chose a so-called \textit{good} one, whose holomorphic part $\tilde \vt^+_\vr$ satisfies the following additional conditions.
\begin{enumerate}
\item $\tilde \vt^+_\vr$ is holomorphic at all cusps not equivalent to $\infty$.
  \item
  The principal part of $\tilde\vt^+_\vr$ at $\infty$ has coefficients in $F_\vr$.
\end{enumerate}
Here $F_\vr$ is the field of definition of $\vt_\vr$.
It is not hard to see that such good forms exist, and are unique up to addition of weakly holomorphic forms in $M^{!}_{2-k}(N, \overline \chi)$ which have coefficients in $F_\vr$ and are holomorphic away from the cusp $\infty$. 
Using indirect methods such as twisting and Hecke operators, Bruinier, Ono and Rhoades proved that all holomorphic part Fourier coefficients of good harmonic Maass forms for $\vartheta_\vr$
are in $F_\vr(\zeta_{ND})$, where $\zeta_{ND}$ denotes a primitive $(ND)$-th root of unity. 
In addition, they raised two questions (see Remarks 3(v) and 3(vi) loc.\ cit.)
\begin{enumerate}
\item[3(v)] Find an explicit construction of good $\tilde \vt_\vr$.
\item[3(vi)] Is there a good $\tilde \vt_\vr$ such that the Fourier coefficients of its holomorphic part are contained in $F_\vr$?
\end{enumerate}
In this paper, we answer the two questions above simultaneously.
In particular, we will extend the action of $\GQ$ from CM newforms to their $\xi$-preimages in a compatible way.
In terms of the Galois actions on Laurent series and Hecke characters  given in Section~\ref{sec:prelim}, we have the following result.

\begin{theorem}
  \label{thm:Gal}
  For every Hecke character $\vr$ of weight $k \ge 2$, there exists $\tilde\vt_\vr \in H_{2-k}(N,\overline{\chi})$ good for $\vt_\vr$ such that
  its holomorphic part $\tilde \vt_\vr^+$ satisfies
  \begin{equation}
    \label{eq:G-act}
(    \tilde \vt_\vr^+)^\sigma = \tilde \vt_{\vr^{\sigma}}^+
\end{equation}
for all $\sigma \in \GQ$.
Here $\vr^\sigma$ is the Hecke character defined in \eqref{eq:chisigc}.
\end{theorem}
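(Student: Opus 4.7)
The plan is to produce $\tilde\vt_\vr$ explicitly as a regularized theta lift and to trace the action of $\GQ$ through each ingredient of the construction. First I would set up the theta kernel for the dual reductive pair that links weight-$k$ binary theta series on the quadratic space associated with $\k$ to modular forms on $\H$, so that the CM newform $\vt_\vr$ appears, up to normalization, as the theta integral of the elementary weight-$k$ binary theta series attached to $\vr$. The regularized theta lift of any harmonic Maass form whose $\xi$-image is this binary theta series then produces a harmonic Maass form of weight $2-k$ whose $\xi$-image is a nonzero multiple of $\vt_\vr/\|\vt_\vr\|^2_\Pet$.

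Next I would choose the input on the binary side to be a Maass--Poincar\'e or Whittaker-type series whose principal part is supported on the single theta coefficient associated with $\vr$ and has coefficient in $F_\vr$. The goodness conditions for $\tilde\vt_\vr$ should then translate directly: because the principal part of the lift at a given cusp is read off from the expansion of the input via an explicit rational kernel, choosing the input holomorphic away from $\infty$ forces the lift to be holomorphic at all cusps not equivalent to $\infty$, and the principal part at $\infty$ inherits the $F_\vr$-rationality of the input.

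For the Galois compatibility, I would verify that $\sigma$ intertwines the construction: the input attached to $\vr^\sigma$ is the $\sigma$-twist (in the Laurent-series sense of Section~\ref{sec:prelim}) of the input attached to $\vr$, because the parametrizing set of Hecke characters and their binary theta coefficients is Galois equivariant while the theta kernel and regularization procedure are defined over $\Q$. Applying $\sigma$ to the holomorphic part Fourier expansion of $\tilde\vt_\vr^+$ therefore gives $\tilde\vt_{\vr^\sigma}^+$, a priori only up to a weakly holomorphic form in $M^!_{2-k}(N,\overline\chi)$ that is holomorphic away from $\infty$ and has $F_\vr$-rational coefficients. By the uniqueness statement for good forms, one then adjusts $\tilde\vt_\vr$ by a $\sigma$-equivariant system of such weakly holomorphic forms to achieve \eqref{eq:G-act} on the nose.

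The main obstacle is descending the field of coefficients from $F_\vr(\zeta_{ND})$, as obtained indirectly by Bruinier--Ono--Rhoades, to $F_\vr$. This reduces to a careful analysis of the Fourier expansion of the regularized theta lift as a sum of Whittaker-type contributions coming from CM cycles, together with a Shimura-type reciprocity to control how $\GQ$ permutes those cycles; it is here that the explicit construction is essential, and it simultaneously answers Question 3(v) of Bruinier--Ono--Rhoades.
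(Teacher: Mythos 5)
Your proposal circles the right themes (an explicit construction, Shimura reciprocity at CM points), but the core mechanism that makes the theorem true is missing, and the construction you describe is circular. You propose to obtain $\tilde\vt_\vr$ as ``the regularized theta lift of any harmonic Maass form whose $\xi$-image is this binary theta series'' --- but a harmonic Maass form whose $\xi$-image is the binary theta series is precisely the object to be constructed, and nothing in the proposal produces it. The paper instead builds the holomorphic part \emph{coefficient by coefficient}: by Serre duality (Proposition~\ref{normalized harmonic Maass form}), the coefficients of a $\xi$-preimage of $\Theta_{\vr,P}$ can be taken to be the regularized Petersson inner products $\langle f_{m,\mu},\Theta_{\vr,P}\rangle_{\Pet}$ against a rational basis $f_{m,\mu}$ of $M^!_{k,P}$. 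The regularized theta lift enters only afterwards, as the tool for \emph{evaluating} these inner products: tensoring $f_{m,\mu}$ with $\eta^{-1}\phi_\eta$ realizes each inner product as a CM value of a Borcherds--Shimura lift, a meromorphic modular form of weight $2k-2$ on a signature $(1,2)$ Shimura variety defined over $\Qbar$. Only then does the theory of complex multiplication give algebraicity of the \emph{ratio} $\langle f,\Theta_{\vr,P}\rangle_{\Pet}/\|\Theta_{\vr,P}\|^2_{\Pet}$ and the Galois action \eqref{eq:Petratio2}. Your third paragraph simply applies $\sigma$ to the Fourier expansion of $\tilde\vt_\vr^+$, which presupposes the coefficients are algebraic; that is the hard content of the theorem, not a step one can assume. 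Relatedly, the Maass--Poincar\'e input you suggest is a dead end here, since Poincar\'e series coefficients are not known to be algebraic.

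Two further ingredients you omit would each block the argument. First, $\vt_\vr$ is a linear combination of components of vector-valued theta series attached to \emph{several} lattices $P_i$ (Proposition~\ref{prop:vv2sc}), so one must compare the periods $\|\Theta_{\vr,P_i}\|^2_{\Pet}$ across lattices and choose Galois-equivariant normalizations $\Omega_{\vr,P}$ with $\Omega_{\vr,P}/\Omega_\vr\in\Qb_{>0}$; this is Proposition~\ref{prop:Pnorm-compare} and requires a Siegel--Weil argument plus Hilbert's Theorem 90. Without it the individually normalized $\xi$-preimages cannot be combined into a single form with coherent algebraic coefficients. Second, to verify goodness after subtracting a weakly holomorphic form, one needs to control the expansions of $\tilde g_{\vr^\sigma}$ at \emph{all} cusps under $\sigma$, which rests on the Galois equivariance of the Weil representation, $\rho_L(\gamma)^{\sigma_a}=\rho_L(\gamma^a)$, and Proposition~\ref{prop:Galexp}; your appeal to ``the theta kernel is defined over $\Q$'' does not supply this. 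Finally, renormalizing from $\vt_\vr/\Omega_\vr$ to $\vt_\vr/\|\vt_\vr\|^2_{\Pet}$ while keeping coefficients in $F_\vr$ requires knowing that $\|\vt_\vr\|^2_{\Pet}/\Omega_\vr$ lies in $F_\vr$ (Lemma~\ref{lemma:ratio}), another step absent from the proposal.
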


The proof of this theorem follows from the explicit construction of harmonic Maass forms using regularized Petersson inner products, see Theorem~\ref{normalized harmonic Maass form} below. This method depends on Serre duality \cite{borcherdsduke} and has been applied in various contexts to produce real-analytic modular  forms \cite{ehlenduke, ES18, Via19, lischwagenscheidt}.
For CM forms of weight $k \ge 2$, such inner products can be evaluated in terms of CM values of Borcherds-Shimura lifts, which are meromorphic modular forms of weight $2k-2$.
Up to suitable periods, these values are algebraic numbers, see Proposition~\ref{prop:algGal}.
Then we need to suitably normalize the periods and understand the Galois action on these algebraic numbers, see Proposition~\ref{prop:Pnorm-compare}.
Putting these together, we arrive at Theorem \ref{thm:hmfchi}, which gives us vector-valued harmonic Maass forms having algebraic holomorphic part Fourier coefficients with compatible Galois actions.
To obtain Theorem \ref{thm:Gal} for scalar-valued newforms, we take suitable linear combinations of components of vector-valued harmonic Maass forms constructed in Theorem \ref{thm:hmfchi}. Here the crucial fact is that these newforms are themselves linear combinations of vector-valued binary theta series, see Proposition~\ref{prop:vv2sc}.

The following corollary of Theorem \ref{thm:Gal} affirms question 3(vi) above.
\begin{corollary}
  \label{cor:main}
  For a CM newform $\vt_\vr \in S_{k}^\new(N,\chi)$ defined over $F_\vr$, and any $\tilde\vt_\vr \in H_{2-k}(N,\overline{\chi})$ good for $\vt_\vr$, all coefficients of its holomorphic part $\tilde \vt_\vr^+$ are contained in $F_\vr$.
\end{corollary}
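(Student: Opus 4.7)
The plan is to combine Theorem~\ref{thm:Gal} with a Galois averaging argument. Recall from the remark preceding Theorem~\ref{thm:Gal} that any two good harmonic Maass forms for $\vt_\vr$ differ by a weakly holomorphic form in $M^{!}_{2-k}(N,\overline\chi)$ holomorphic away from $\infty$ whose Fourier coefficients lie in $F_\vr$. It therefore suffices to exhibit a single good form $\tilde\vt'_\vr$ whose holomorphic part has all coefficients in $F_\vr$; any other good form then differs from $\tilde\vt'_\vr$ by something with $F_\vr$-rational coefficients, and the corollary follows.

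To produce such a representative, I would start from the distinguished form provided by Theorem~\ref{thm:Gal}. Since $\vt_\vr$ is defined over $F_\vr$, every $\sigma \in \Gal(\Qbar/F_\vr)$ fixes $\vt_\vr$, and the compatibility of the association $\vr \mapsto \vt_\vr$ with the action \eqref{eq:chisigc} forces $\vt_{\vr^\sigma} = \vt_\vr$. In particular $F_{\vr^\sigma} = F_\vr$, so $\tilde\vt_{\vr^\sigma}$ is itself a good harmonic Maass form for $\vt_\vr$: its $\xi_{2-k}$-image equals $\vt_\vr/\|\vt_\vr\|^2_\Pet$, it is holomorphic at cusps inequivalent to $\infty$, and its principal part at $\infty$ has $F_\vr$-rational coefficients.

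By the result of Bruinier, Ono and Rhoades recalled in the introduction, the coefficients of $\tilde\vt_\vr^+$ lie in the finite extension $F_\vr(\zeta_{ND})/F_\vr$, so the Galois orbit $\{(\tilde\vt_\vr^+)^\sigma\}$ factors through the finite quotient $G := \Gal(F_\vr(\zeta_{ND})/F_\vr)$. I would then set
\[
\tilde\vt'_\vr \;:=\; \frac{1}{|G|} \sum_{\sigma \in G} \tilde\vt_{\vr^\sigma} \;\in\; H_{2-k}(N,\overline\chi).
\]
Each summand is good for $\vt_\vr$ by the previous paragraph, and all three conditions defining ``good'' are linear, so $\tilde\vt'_\vr$ is itself good for $\vt_\vr$. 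By Theorem~\ref{thm:Gal} its holomorphic part equals $\frac{1}{|G|}\sum_{\sigma \in G}(\tilde\vt_\vr^+)^\sigma$, which is manifestly $G$-invariant and hence has all Fourier coefficients in $F_\vr$. Combined with the first paragraph, this proves the corollary.

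The only step requiring real care is checking that averaging preserves goodness --- that is, that the $\xi_{2-k}$-image, the holomorphy at cusps away from $\infty$, and the $F_\vr$-rationality of the principal part at $\infty$ all survive the $\Q$-linear combination. This is immediate from the identity $\vt_{\vr^\sigma} = \vt_\vr$, which places every summand in the same affine subspace of $H_{2-k}(N,\overline\chi)$; in particular no compatibility issue between different $F_{\vr^\sigma}$ arises.
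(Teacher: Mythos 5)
Your argument is correct, and it is the same basic mechanism the paper uses --- Galois averaging over a finite group to push the coefficients down to $F_\vr$, followed by the observation that good forms are unique up to weakly holomorphic forms with $F_\vr$-rational coefficients --- but you package it differently. The paper does not deduce the corollary formally from the statement of Theorem~\ref{thm:Gal}; instead, the averaging is built into the construction: the good form $\tilde\vartheta_\vr$ produced in Section~\ref{subsec:preimage} is \emph{already} the average of $\tilde g_{\vr^\sigma}-f_{\vr^\sigma}$ over $\sigma\in\Gal(\Qb(\vr)/F_\vr)$, which is a legitimate (finite, Galois) averaging set because the construction shows the coefficients lie in $\Qb(\vr)$ and Lemma~\ref{lemma:field} shows $\Qb(\vr)/F_\vr$ is Galois; the normalization by $\Omega_\vr/\|\vartheta_\vr\|^2_\Pet\in F_\vr$ is supplied by Lemma~\ref{lemma:ratio}. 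You instead treat the theorem as a black box and average over $\Gal(F_\vr(\zeta_{ND})/F_\vr)$, importing the Bruinier--Ono--Rhoades bound $F_\vr(\zeta_{ND})$ solely to make the averaging group finite. That is logically sound, and it has the virtue of showing the corollary is a formal consequence of Theorem~\ref{thm:Gal} plus prior work; the cost is an external dependency the paper deliberately avoids (part of its point is to recover and refine the BOR result). You could drop that dependency by averaging over the finite orbit $\{\vr^\sigma:\sigma\in\Gal(\Qbar/F_\vr)\}$ directly --- finiteness holds because there are only finitely many Hecke characters of fixed modulus and weight, and Theorem~\ref{thm:Gal} already tells you the coefficient series are permuted accordingly. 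Two small points you handle correctly but should keep explicit: $\vt_{\vr^\sigma}=\vt_\vr$ and hence $F_{\vr^\sigma}=F_\vr$ for $\sigma\in\Gal(\Qbar/F_\vr)$, so each summand is good for $\vt_\vr$ in the required sense; and the sum over the quotient group is well defined because each $\tilde\vt_{\vr^\sigma}$ is determined by its holomorphic part together with its $\xi$-image, both of which depend only on $\sigma|_{F_\vr(\zeta_{ND})}$.
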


\begin{remark}
  In \cite{Can14}, Candelori gave an algebraic geometric interpretation of
  integral weight harmonic Maass forms. With this approach, he was able to prove that harmonic Maass forms associated with CM modular forms have algebraic holomorphic part Fourier coefficients.
  From private communication, he informed us that the same approach could give another proof of Corollary~\ref{cor:main}.
\end{remark}

\begin{remark}
In the case of weight $k = 1$, the harmonic Maass forms with CM newforms as $\xi_1$-images are studied in \cite{dukeli, ehlenduke, Via19, CL20}. The holomorphic part Fourier coefficients are shown to be $\Qbar$-linear combinations of logarithms of algebraic numbers. 
\end{remark}

It is generally believed that the Fourier coefficients of a generic harmonic Maass form are transcendental (see Remark 8 in \cite{bruinieronoheegnerdivisors}), and expected that the two known cases above are the only harmonic Maass forms with algebraic Fourier coefficients. 
In the unary case, the second and third authors showed that the Fourier coefficients are rational with explicitly bounded denominators \cite{lischwagenscheidt}. In the binary case, however, it seems that the algebraic coefficients have unbounded denominators. We formulate these observations into the following conjecture.

\begin{conjecture}
  \label{conj:1}
  Let $\tilde f$ be a harmonic Maass form of weight $k \in \frac{1}{2}\Zb$ for a congruence subgroup such that $\xi(\tilde f) \neq 0$. If the Fourier coefficients of its holomorphic part are all algebraic, then $\xi(\tilde f)$ is either a linear combination of unary theta series with $k \in \{\frac{1}{2}, \frac{3}{2}\}$ or of binary theta series with $2-k \ge 2$.
  In those cases, the coefficients have bounded denominators if and only if $k$ is half-integral. 
\end{conjecture}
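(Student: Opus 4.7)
Since Conjecture~\ref{conj:1} is open, the most I can offer is a breakdown into what is within reach and what constitutes a genuine obstacle. The easy direction --- that unary theta shadows with $k\in\{\tfrac{1}{2},\tfrac{3}{2}\}$ and binary theta shadows with $2-k\ge 2$ admit harmonic Maass pre-images with algebraic holomorphic part coefficients --- is supplied by the work of Zagier and Zwegers together with \cite{lischwagenscheidt} in the unary case, and by Theorem~\ref{thm:Gal} of the present paper in the binary case. The substantive content of the conjecture therefore lies in the converse and in the denominator dichotomy.

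For the converse, the plan is to show that if $\xi(\tilde f)=g$ is a cusp form not of the allowed theta type, then at least one holomorphic part coefficient of $\tilde f$ must be transcendental. A natural strategy is to realize such coefficients, via the Bruinier--Funke pairing, as period-type integrals of $g$ against principal-part data (for instance against Maass--Poincar\'e series), and then to invoke transcendence results of Chudnovsky-- or W\"ustholz-type for periods of the motive attached to $g$. The main obstacle is precisely here: outside the CM and theta-series range these periods are not known to be transcendental. This is essentially a Chowla--Selberg-type problem beyond the CM setting, and without a substantial advance in transcendence theory for modular periods, even a single non-trivial instance --- a fixed non-CM weight-$2$ newform, say --- appears out of reach.

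The denominator clause splits along the integer / half-integer divide. In the half-integral (unary) case, bounded denominators were established in \cite{lischwagenscheidt} via an explicit regularized theta lift, and I would extend that quantitative analysis to cover all admissible unary shadows; this step is technical rather than conceptual. In the integral-weight (binary) case, to prove \emph{un}boundedness of denominators I would work along a family of CM newforms with growing conductor and locate growing $p$-adic valuations in the CM values of the meromorphic Borcherds--Shimura lifts that underlie the construction in Theorem~\ref{thm:Gal}. The principal difficulty on this side is a non-vanishing / growth statement for those valuations. In summary, the denominator half should be accessible through quantitative refinements of existing theta-lift constructions, while the transcendence-flavoured converse remains the central obstacle.
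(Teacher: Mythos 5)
The statement you were asked about is Conjecture~\ref{conj:1}, and the paper itself offers no proof of it: it is formulated as an open conjecture, motivated only by the two known families of examples and by numerical evidence (such as the denominators $n^3$ visible in the example of Section~\ref{sec:eta8}). You correctly recognize this, and your assessment of what is actually established is accurate: the existence of preimages with algebraic holomorphic part coefficients for the two allowed shadow types follows from \cite{lischwagenscheidt} and from Theorem~\ref{thm:Gal} of this paper, and the bounded-denominator statement in the half-integral (unary) case is likewise proved in \cite{lischwagenscheidt}. The forward implication of the conjecture (algebraicity of all coefficients forces a theta-type shadow) and the unboundedness of denominators in the integral-weight binary case are genuinely open, and your identification of the obstacles --- transcendence of period-type integrals attached to non-CM forms via the Bruinier--Funke pairing on one side, and control of $p$-adic valuations of CM values of Borcherds--Shimura lifts on the other --- is a reasonable reading of where the difficulty lies and is consistent with the authors' own remarks (e.g.\ the reference to Remark~8 of \cite{bruinieronoheegnerdivisors}). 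There is no gap to flag, because you do not claim a proof; but neither your proposal nor the paper establishes the conjecture, so nothing here should be read as settling it.

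One small caution: what you call the ``easy direction'' is not literally part of the conjecture's statement, which is a one-way implication plus the denominator dichotomy; the existence results you cite serve only to show that the proposed characterization is not vacuous. Keeping that distinction explicit would sharpen your write-up.
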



The paper is organized as follows. Section~\ref{sec:prelim} contains preliminary information, and a relation between CM newforms and vector-valued binary theta series in Proposition~\ref{prop:vv2sc}.
In Section~\ref{subsec:algPet}, we prove the algebraicity of regularized Petersson inner product between weakly holomorphic modular forms and binary theta series.
Then in Section~\ref{subsec:Pnorm}, we compare the Petersson norm of binary theta series associated to different quadratic spaces in order to normalize the period in Proposition~ \ref{prop:Pnorm-compare}.
Finally, we prove the main result in Section~\ref{subsec:preimage}, and give a numerical example in the last section.\\

{\bf Acknowledgement}:
We thank Luca Candelori for private communication concerning \cite{Can14}, and sharing the slides of his talk at the AMS meeting at Portland in 2018.
Y. Li was supported by LOEWE research unit USAG, and by the Deutsche Forschungsgemeinschaft (DFG) through the Collaborative Research Centre TRR 326 ``Geometry and Arithmetic of Uniformized Structures", project number 444845124. M. Schwagenscheidt was supported by SNF projects 200021\_185014 and PZ00P2\_202210
\section{Preliminaries}
\label{sec:prelim}
Fix an algebraic closure $\Qbar \subset \Cb$ of $\Qb$,
which induces $\Gal(\Cb/\Qb) \to \GQ$.
Denote $\c \in \GQ$ the image of complex conjugation.
For $\alpha \in \Qbar$ and $\sigma \in \GQ$, we denote
$$
\alpha^\sigma := \sigma^{-1}(\alpha)
$$
its Galois conjugate.\footnote{The superscript makes the action a right action. Therefore we have an inverse in the definition.}
For any formal Laurent series $g = \sum_n c_n q^n$ with $c_n \in \Qbar$, we also denote
$$
g^\sigma := \sum_n c_n^\sigma q^n
$$
its Galois conjugate.
For a number field $E$, we denote $\Ab_E$, resp.\ $\hat E$, its ring of adeles, resp.\ finite adeles, and write $\Ab := \Ab_\Qb$. 

\subsection{Modular forms for the Weil representation}

Let $V$ be a rational quadratic space of signature $(b^+,b^-)$ with quadratic form $Q \colon V \to \Q$ and corresponding bilinear form $(\cdot,\cdot)$.
Let $L \subset V$ be an even lattice of level $N$ and let $L'$ be the dual lattice of $L$. We also let $L^-$ (resp.\ $V^-$) denote the lattice $L$ (resp.\ vector space $V$) with quadratic form $-Q$. The discriminant group $L'/L$ is finite and isomorphic to $\hat{L}'/\hat{L}$, where $\hat{L}' := L' \otimes \hat{\Z}$. For $\mu \in L'/L$ we consider the characteristic functions 
\begin{equation}
  \label{eq:phimu}
  \phi_\mu := \cha(\hat L + \mu), \quad \mu \in L'/L,
\end{equation}
which form a basis for the space
\[
\Sc_L := \bigoplus_{\mu \in L'/L}\C \phi_\mu \subset \Sc(V(\hat\Qb))
\]
of Schwartz functions on $V(\hat\Qb)$ which are supported on $\hat L'$ and which are constant on cosets of $\hat{L} := L \otimes \hat{\Z}$. We let $\langle \cdot,\cdot \rangle$ be the bilinear pairing on $\Sc_L$ defined by $\langle \phi_\mu,\phi_\nu \rangle = \delta_{\mu,\nu}$.

Let $\rho_L$ be the Weil representation of the two-fold metaplectic cover $\Mp_2(\Z)$ of $\SL_2(\Z)$ on $\Sc_L$ (see \cite[Section~4]{borcherds}). It is self-adjoint with respect to $\langle \cdot, \cdot \rangle$.
If  $L$ has level $N$, then its matrix coefficients with respect to the standard basis $\{\phi_\mu: \mu \in L'/L\}$ are defined over $\Gal(\Qb(\zeta_N)/\Qb)$. 
Theorem~4.3 in \cite{mcgraw} tells us that for any $\gamma \in \SL_2(\Zb)$
\begin{equation}
  \label{eq:gammaa}
\rho_L(\gamma)^{\sigma_a} = \rho_L(\gamma^a),~
\gamma^a := \smat{1}{}{}{a} \gamma \smat{1}{}{}{a^{-1}} \in \SL_2(\Zb/N\Zb),
\end{equation}
with 
$\sigma_a \in \Gal(\Qb(\zeta_N)/\Qb)$ the element associated with $a \in (\Zb/N\Zb)^\times$ by the Artin map.
Note that $\Gamma(N)$ is in the kernel of $\rho_L$, and $\gamma \mapsto \gamma^a$ is an automorphism of $\Gamma(N)\backslash \SL_2(\Zb)$.
Therefore for any $\Gamma_0(N)$-invariant function $\phi \in \Sc_{L}$ valued in $\Qb$, the function
\begin{equation}
  \label{eq:tr}
  \tr^N_1 \phi :=      \sum_{\gamma \in \Gamma_0(N) \backslash \SL_2(\Zb)}
  \rho_{L}(\gamma)(\phi) \in \Sc_L
\end{equation}
also has value in $\Qb$.

For $k \in \frac{1}{2}\Z$, let $H_{k, L}$ be the space of harmonic (weak) Maass forms of weight $k$ valued in $\Sc_L$ for the representation $\rho_L$  (see \cite{bruinierfunke04}) for which there is a Laurent polynomial 
\[
P_f(\tau) = \sum_{\mu \in L'/L}\sum_{\substack{m \in \Q_{\leq 0} \\ m \gg -\infty}}c_f^+(m,\mu)q^m \phi_\mu,
\]
called the principal part of $f$, such that $f(\tau)-P_f(\tau) = O(e^{-\varepsilon})$ as $v=\Im(\tau) \to \infty$, for some $\varepsilon > 0$. It contains $S_{k,L} \subset M_{k,L} \subset M_{k,L}^!$,  the subspaces of cusp forms, holomorphic, and weakly holomorphic modular forms. The space $H_{k,L}$ can only be non-trivial if
\[
\kappa := k-\frac{b^--b^+}{2}
\]
is an integer. In this case, the components of $f(\tau) = \sum_{\mu \in L'/L}f_\mu(\tau)\phi_\mu \in H_{k, L}$ satisfy the symmetry
\begin{equation}
  \label{eq:sym}
f_{-\mu}(\tau) = (-1)^\kappa f_{\mu}(\tau)  .
\end{equation}

For $k \neq 1$ every harmonic Maass form $f \in H_{k,L}$ has a splitting $f = f^+ + f^-$ into a holomorphic and a non-holomorphic part with Fourier expansions of the shape
\begin{align*}
f^+(\tau) &= \sum_{\mu \in L'/L}\sum_{\substack{m \in \Q \\ m \gg -\infty}}c_f^+(m,\mu)q^m \phi_\mu, \\
f^-(\tau) &= \sum_{\mu \in L'/L}\sum_{\substack{m \in \Q \\ m < 0}}c_f^-(m,\mu)\Gamma(1-k,4\pi|m|v)q^m \phi_\mu,
\end{align*}
with coefficients $c_{f}^\pm(m,\mu) \in \C$ and the incomplete Gamma function $\Gamma(s,x) = \int_{x}^\infty e^{-t}t^{s-1}dt$. The $\xi$-operator
\[
\xi_{k}f = 2iv^k \overline{\frac{\partial}{\partial \overline{\tau}}f}
\]
defines a surjective map $\xi_k \colon H_{k,L} \to S_{2-k,L^-}$.

We now fix a convenient basis for $M_{k,L}^!$ whose elements all have rational coefficients (see the appendix of \cite{BEY21}).


\begin{lemma}\label{basis modular forms}
	Let $k \in \frac{1}{2}\Z$ with $k \geq 2$. Furthermore, let $d = \dim(M_{k,L})$.
	 \begin{enumerate}
	 	\item There is a basis $\{g_1,\dots,g_d\}$ of $M_{k,L}$ with only rational Fourier coefficients, and indices $(n_1,\beta_1),\dots,(n_d,\beta_d)$ with $n_j+Q(\beta_j) \in \Z$ such that 
	 	\[
	 	c_{g_j}(n_l,\beta_l) = \frac{1}{2}\delta_{j,l}.
              \]
		\item For each $m \in \Q_{>0}$ and $\mu \in L'/L$ with $m + Q(\mu) \in \Z$ there exists a unique weakly holomorphic modular form $f_{m,\mu} \in M_{k,L}^!$ with only rational Fourier coefficients, having a Fourier expansion of the form
	\[
	f_{m,\mu} = \frac{1}{2}q^{-m}(\phi_\mu + (-1)^{\kappa} \phi_{-\mu}) + O(1)
	\]
	and with $c_{f_{m,\mu}}(n_j,\pm\beta_j) = 0$ for all $j \in \{1,\dots,d\}$.
	\end{enumerate}
	For $m \in \Q_{\leq 0}$ and $\mu \in L'/L$ we put $f_{m,\mu} = (\pm 1)^{\kappa}g_{j}$ if $(m,\mu) = (n_j,\pm\beta_j)$, and $f_{m,\mu} = 0$ otherwise. Then the non-zero forms $f_{m,\mu}$ with $m \in \Q$ and $\mu \in (L'/L)/\{\pm 1\}$ such that $m + Q(\mu) \in \Z$ form a basis of $M_{k,L}^!$.
\end{lemma}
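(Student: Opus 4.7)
The plan combines two ingredients. First, the space $M_{k,L}^!$ possesses a natural $\Q$-structure given by rational Fourier expansions at infinity; its existence is a consequence of the $\Q(\zeta_N)$-rationality of $\rho_L$ recorded in \eqref{eq:gammaa}, see \cite{mcgraw}. Second, Serre duality identifies the obstruction to realizing a prescribed principal part in $M_{k,L}^!$ with the cusp form space $S_{2-k,L^-}$, which vanishes for $k \ge 2$ since $2-k \le 0$. Consequently, every formal Laurent-polynomial principal part supported on indices $(n,\mu)$ with $n + Q(\mu) \in \Z$ is realized by some element of $M_{k,L}^!$.

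Part (1) then amounts to rational linear algebra. Starting from an arbitrary basis $h_1,\dots,h_d$ of $M_{k,L}$ with rational Fourier coefficients, the map $h \mapsto (c_h(n,\mu))$, where $(n,\mu)$ runs over non-negative indices with $n + Q(\mu) \in \Z$ modulo the symmetry $(n,\mu) \sim (n,-\mu)$, is injective. Hence there exist $d$ pivot pairs $(n_j,\beta_j)$ making the resulting $d \times d$ submatrix of coefficients invertible; these must avoid pairs with $2\beta_j = 0$ when $\kappa$ is odd, since \eqref{eq:sym} then forces the corresponding coefficient to vanish identically. A $\Q$-Gaussian elimination rescales and pivots to deliver $g_1,\dots,g_d$ with $c_{g_j}(n_l,\beta_l) = \tfrac{1}{2}\delta_{j,l}$.

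For part (2), fix $m > 0$ and $\mu$ with $m + Q(\mu) \in \Z$. Using the Serre-duality surjection, choose any $\tilde f \in M_{k,L}^!$ with principal part $\tfrac{1}{2}q^{-m}(\phi_\mu + (-1)^\kappa\phi_{-\mu})$; subtracting the unique linear combination of the $g_j$ that annihilates the coefficients $c_{\tilde f}(n_j,\beta_j)$ produces $f_{m,\mu}$, uniquely characterized by its principal part together with its vanishing at the $(n_j,\beta_j)$. Both defining conditions are $\Q$-rational and hence preserved under $\Gal(\Qbar/\Q)$, so uniqueness forces $f_{m,\mu}^\sigma = f_{m,\mu}$ for every $\sigma$, giving the rationality. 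For the basis claim, any $f \in M_{k,L}^!$ reduces modulo the $f_{m,\mu}$ with $m > 0$ (whose principal parts range over a $\Q$-basis of the space of admissible principal parts) to an element of $M_{k,L}$, which is in turn a $\Q$-linear combination of the $g_j = f_{n_j,\beta_j}$; linear independence is immediate from the distinct pole indices. The main subtlety throughout is correctly handling the $\pm$ symmetry encoded in \eqref{eq:sym}, both in choosing valid pivots and in interpreting the factor $\tfrac{1}{2}$ in the principal parts.
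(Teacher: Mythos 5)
Your proposal is correct and follows essentially the same route as the paper's argument (which appears in the appendix of \cite{BEY21} and is only sketched here): rational bases of $M_{k,L}$ and $M_{k,L}^!$ from \cite{mcgraw}, realization of arbitrary principal parts via Serre duality using $S_{2-k,L^-}=0$ for $k\ge 2$, and Gaussian elimination against the pivot coefficients $(n_j,\beta_j)$. Your Galois-descent phrasing of the rationality of $f_{m,\mu}$ is a cosmetic variant of the paper's step of subtracting a suitable rational holomorphic form, and your remark about excluding pivots with $2\beta_j=0$ when $\kappa$ is odd is a correct reading of \eqref{eq:sym}.
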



Using the above basis for $M_{k,L}^!$ we can explicitly construct $\xi$-preimages of cusp forms whose coefficients are given by regularized Petersson inner products
\begin{align}\label{petersson inner product}
\langle f,g \rangle_{\Pet} := 
\lim_{T \to \infty}\int_{\mathcal{F}_T}\left\langle f(\tau),\overline{g(\tau)}\right\rangle v^k \frac{du dv}{v^2},
\end{align}
where $\mathcal{F}_T = \{\tau = u+iv \in \H: |\tau| \geq 1, |u| \leq 1/2, v \leq T\}$ is a truncated fundamental domain for $\SL_2(\Z)\backslash \H$, and $f \in M_{k,L}^!, g \in S_{k,L}$.
Theorem 2.14 in \cite{ES18} gives us the following result.

\begin{proposition}\label{normalized harmonic Maass form}
	Let $k \in \frac{1}{2}\Z$ with $k \geq 2$ and let $g \in S_{k,L}$ be a cusp form of weight $k$ for $\rho_L$. Then the generating series
	\[
	\tilde{g}^+(\tau) = \sum_{\mu \in L'/L}\sum_{m \in \Q}\langle f_{m,\mu},g\rangle_{\Pet}\, q^m\phi_\mu
	\]
	is the holomorphic part of a harmonic Maass form $\tilde{g} \in H_{2-k,L^-}$ with $\xi_{2-k}(\tilde{g}) = g$. Here $f_{m,\mu} \in M_{k,L}^!$ are the modular forms constructed in Lemma~\ref{basis modular forms}. 
\end{proposition}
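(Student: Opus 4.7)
The strategy is to exhibit $\tilde g$ as a specific preimage of $g$ under the surjection $\xi_{2-k}: H_{2-k,L^-} \twoheadrightarrow S_{k,L}$. Starting from any preimage $\tilde g_0$, every other preimage has the form $\tilde g_0 + h$ for some $h \in M_{2-k,L^-}^!$; my plan is to use this freedom, together with Borcherds-style duality, to pin down a unique $\tilde g$ whose holomorphic part has principal part of the prescribed shape, and then to read off all Fourier coefficients via the Bruinier--Funke pairing.

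The main tool is the Bruinier--Funke pairing identity \cite{bruinierfunke04}: for any $f \in M_{k,L}^!$ and any preimage $\tilde g$ of $g$,
\[
\langle f, g \rangle_{\Pet} \;=\; \{f, \tilde g\} \;:=\; \sum_{\mu,\, n} c_f(n,\mu)\, c_{\tilde g}^+(-n,\mu),
\]
a finite sum obtained from Stokes' theorem once $\tilde g^+$ has bounded principal part. The complementary tool is Borcherds duality, which characterizes the principal parts realized by some $h \in M_{2-k,L^-}^!$ as exactly those pairing trivially with all of $M_{k,L}$. Using the basis $\{g_j\}$ normalized by $c_{g_j}(n_l,\beta_l) = \tfrac{1}{2}\delta_{j,l}$ from Lemma~\ref{basis modular forms}, this duality separates indices: modulo admissible principal parts, any principal part is uniquely determined by its values on the finite index set $\{(n_l,\pm\beta_l)\}$. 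So I choose $h$ uniquely so that $\tilde g := \tilde g_0 + h$ has principal part (and constant term, where relevant) supported exclusively on this index set.

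With $\tilde g$ so chosen, I apply the pairing to $f = f_{m,\mu}$ for $m > 0$: the principal part $\tfrac{1}{2}q^{-m}(\phi_\mu + (-1)^\kappa \phi_{-\mu})$ contributes $\tfrac{1}{2}\bigl(c_{\tilde g}^+(m,\mu) + (-1)^\kappa c_{\tilde g}^+(m,-\mu)\bigr) = c_{\tilde g}^+(m,\mu)$ via the symmetry \eqref{eq:sym} (whose parity is preserved under $k \leftrightarrow 2-k$), while every remaining nonnegative-$n$ term vanishes because $c_{\tilde g}^+(-n,\mu')$ is supported on $(n_l,\pm\beta_l)$ and $c_{f_{m,\mu}}(n_l,\pm\beta_l) = 0$ by construction. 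For the remaining indices, I apply the same pairing to $f = g_l \in M_{k,L}$, where the regularized Petersson product coincides with the convergent one: this yields $(g_l,g)_{\Pet} = c_{\tilde g}^+(n_l,\beta_l)$, matching $\langle f_{n_l,\beta_l}, g \rangle_{\Pet}$ since $f_{n_l,\beta_l} = g_l$, and both sides vanish at any other index with $m \leq 0$.

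The main technical hurdle will be the sign bookkeeping: keeping track of the $\pm\mu$ symmetry, verifying that the parity $\kappa$ really does agree in weights $k$ and $2-k$, and confirming that the Borcherds-duality subspace has codimension exactly $d = \dim M_{k,L}$ so that the correcting form $h$ indeed exists and is uniquely determined. All of these are routine once the conventions in Lemma~\ref{basis modular forms} are in place, which is precisely why the basis $\{f_{m,\mu}\}$ was normalized to be dual to $\{g_j\}$ at the indices $(n_j,\pm\beta_j)$.
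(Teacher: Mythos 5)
Your argument is correct, and it reaches the same computational core as the paper by a slightly different logical route. The paper (following Theorem 2.14 of \cite{ES18}; see also \cite[Section~2.3]{lischwagenscheidt}) uses Serre duality to \emph{characterize} the formal $q$-series with the symmetry \eqref{eq:sym} that occur as holomorphic parts of $\xi$-preimages of $g$: they are exactly those $G$ with $\{f,G\} = \langle f,g\rangle_{\Pet}$ for all $f \in M_{k,L}^!$. It then verifies this identity for the proposed series on the spanning set $\{f_{m,\mu}\}$, where it is immediate from the dual-basis normalization of Lemma~\ref{basis modular forms}. You instead take existence for granted (surjectivity of $\xi_{2-k}$), use Borcherds' obstruction theorem to correct an arbitrary preimage so that its principal part and constant term are supported on the finitely many indices $(n_j,\pm\beta_j)$, and then read off every coefficient from the Bruinier--Funke pairing; the diagonal normalization $c_{g_j}(n_l,\beta_l)=\tfrac{1}{2}\delta_{j,l}$ makes the correction step a triangular linear system, so the required $h$ exists, and your parity concern is harmless since the $\kappa$'s for $(k,L)$ and $(2-k,L^-)$ sum to $2$. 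What your version buys is a genuinely constructive description of $\tilde g$ inside $H_{2-k,L^-}$ with an explicitly normalized principal part; what the paper's version buys is brevity, since the characterization subsumes both the existence of the preimage and the correction of its principal part in one statement, and the verification against the dual basis is then a one-line computation. One small point to watch in a full write-up: the indices $(n_j,\beta_j)$ label holomorphic Fourier coefficients of the $g_j$, so the support set for the principal part of $\tilde g$ should be read as $\{(-n_j,\pm\beta_j)\}$ (equivalently, fix the sign convention in the last paragraph of Lemma~\ref{basis modular forms} before pairing), and the usual factor-of-two care is needed at $2$-torsion elements $\beta_j = -\beta_j$.
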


The above result also holds in the scalar-valued case. For the rest of this section, let $k \in \Z$.
For any Laurent series $f$ in $q^{1/N}$,
denote
$\prin( f)$ its principal part, which is a polynomial in $q^{-1/N}$.
For a harmonic Maass form $\tilde f \in H_{2-k}(\Gamma(N))$ and a cusp form $g \in S_{k}(\Gamma(N))$, we denote
\[
\prin(\tilde f \cdot g) := \prin(\tilde f^+ \cdot g).
\]
An application of Stokes' theorem gives us
\begin{equation}
  \label{eq:stokes}
\begin{split}
[\SL_2(\Zb): \Gamma(N)]^{-1}
\langle \xi_{2-k}(\tilde f), g \rangle_{\Pet}
&=
 \sum_{c \in \Gamma(N) \backslash \mathbb{P}^1(\Qb)}
                                                     \CT(\prin(\tilde f \mid_{2-k}\gamma_c) \cdot (g \mid_{k} \gamma_c))\\
  &=
\CT\lp
\lp \tr^N_1   (\tilde f \cdot g)\rp^+\rp.    
\end{split}  
\end{equation}
Here $\gamma_c \in \Gamma(N)\backslash \SL_2(\Zb)$ is any element satisfying $\gamma(c) \cdot \infty = c$, and
 the superscript $+$ indicates the holomorphic part.
In particular, the sum on the right is always 0 when $\tilde f$ is weakly holomorphic. Conversely, given any collection of principal parts $\{P_c: c \in \Gamma(N)\backslash \mathbb{P}^1(\Qb)\}$ such that
\[
 \sum_{c \in \Gamma(N) \backslash \mathbb{P}^1(\Qb)}
\CT(P_c \cdot (g \mid_{k} \gamma_c))
= 0
\]
for all $g \in S_k(\Gamma(N))$, then there exists $f \in M^!_{2-k}(\Gamma(N))$ with $\prin(f \mid_{2-k} \gamma_c) = P_c$. 

\subsection{CM cusp forms}
\label{subsec:setup}
Let $\k = \Qb(\sqrt{-D}) \subset \Qbar \subset \Cb$ be an imaginary quadratic field of fundamental discriminant $-D < 0$. Let $I_\mf$ be the fractional ideals of $\k$ coprime to a modulus $\mf$  and
\begin{equation}
  \label{eq:Pmmu}
P_{\mf, \mu}= \{\alpha \Oc_\k \in I_\mf: \alpha \equiv \mu \bmod{\mf}\}, \quad \mu \in (\Oc_\k/\mf)^\times.
\end{equation}
An algebraic Hecke character  $\vr$  of weight $k \ge 1$ is a multiplicative function on $I_\mf$ satisfying
\begin{equation}
  \label{eq:prin}
\vr(\alpha \Oc_\k) = \alpha^{k-1}  , \quad \alpha \Oc_\k \in P_{\mf, 1} \subset I_\mf.
\end{equation}
We say that it has  modulus $\mf$ if $\mf$ is the largest ideal such that \eqref{eq:prin} is satisfied.

The image of $\vr$ is contained in $\Qb(\vr)^\times$ for a number field $\Qb(\vr) \subset \Cb$, which contains $\k$.
Note that $\Qb(\vr)/\Qb$ is not necessarily Galois.
For any $\sigma \in \Gal(\Qbar/\k)$, the character
\begin{equation}
  \label{eq:chisig}
  \vr^\sigma(\af) := (\vr( \af))^\sigma
\end{equation}
is an algebraic Hecke character with the same modulus and infinity type as $\vr$.
On the other hand for complex conjugation $\c \in \GQ$, the character
\begin{equation}
  \label{eq:chic}
  \vr^\c(\af) := \overline{\vr(\overline \af)}
\end{equation}
is an algebraic Hecke character modulo $\overline \mf$ with the same infinity type as $\vr$.
So for any $\sigma \in \GQ$,
 we set
\begin{equation}
  \label{eq:chisigc}
  \vr^\sigma(\af) :=
  (\vr(\sigma(\af))^\sigma,  
\end{equation}
which is an algebraic Hecke character of modulus $\sigma(\mf)$ and weight $k$.
This agrees with \eqref{eq:chisig} when $\sigma \in \Gal(\Qbar/\k)$ since $\sigma(\af) = \af$ in that case.

The generating series
\begin{equation}
  \label{eq:vartheta}
  \vartheta_\vr(\tau) := \sum_{\af \subset \Oc} \vr(\af) q^{\Nm(\af)}
\end{equation}
is a CM newform in $S_{k}(N, \chi_\k \eta)$ with $N := DM, M:= \Nm(\mf)$ and 
\begin{equation}
  \label{eq:chars}
\chi_\k(d) := \lp \frac{-D}{d} \rp, \quad
\eta(\alpha) := \frac{\vr(\alpha \Oc)}{\alpha^{k-1}}, \quad \alpha \in \k^\times.
\end{equation}
Here $\chi_\k \eta$ is a Dirichlet character modulo $N$ of parity $(-1)^{k}$. 
For any $\sigma \in \GQ$, we have 
\begin{equation}
  \label{eq:chi-chic}
  ( \vartheta_\vr)^\sigma =  \vartheta_{\vr^{\sigma}}.
\end{equation}
Let $F_\vr \subset \Qb(\vr)$ denote the field generated by the Fourier coefficients of $\vartheta_\vr$. 
It is well-known  \cite[Proposition~3.2]{Ribet77} that $F_\vr$ is Kroneckerian, i.e.\ either totally real or a CM field, and contains the image of $\eta$.
%
We record here a result about the field $\Qb(\vr)$, which gives another proof of this fact.
\begin{lemma}
  \label{lemma:field}
  In the notations above, the extension $\Qb(\vr)/F_\vr$ is Galois with Galois group an elementary 2-group.
  Furthermore, $\Qb(\vr)$ is Kroneckerian.
\end{lemma}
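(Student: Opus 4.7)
My plan is to split the proof into two parts: first showing that $\Qb(\vr)/F_\vr$ is Galois with an elementary abelian 2-group as Galois group, and then showing that $\Qb(\vr)$ is Kroneckerian.

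For the first part, I would write $\Qb(\vr)$ as a compositum over $F_\vr$ of at-most-quadratic subextensions, one for each prime $\p \subset \Oc_\k$ coprime to $\mf$. For $p = \p\overline\p$ split, both $\vr(\p)$ and $\vr(\overline\p)$ are roots of $X^2 - a_p X + \eta(p) p^{k-1}$ over $F_\vr$, since $a_p = \vr(\p) + \vr(\overline\p) \in F_\vr$ is the $p$-th Hecke eigenvalue of $\vartheta_\vr$ and $\vr(\p) \vr(\overline\p) = \vr(p \Oc_\k) = \eta(p) p^{k-1} \in F_\vr$ (using that the image of $\eta$ lies in $F_\vr$ by Ribet). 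For $p$ inert, $\vr(\p) = \vr(p\Oc_\k) = \eta(p) p^{k-1} \in F_\vr$ directly. For $\p$ ramified with $\p^2 = p\Oc_\k$ (up to units), $\vr(\p)^2 \in F_\vr$. Since a compositum of at-most-quadratic extensions is automatically Galois with elementary abelian 2-group as Galois group, this proves the first claim.

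For the second part, the crucial identity is $|\sigma(\vr(\af))|^2 = \Nm(\af)^{k-1} \in \Qb$ for every $\af \in I_\mf$ and every field embedding $\sigma : \Qb(\vr) \hookrightarrow \Cb$. For a principal ideal $\af = \alpha\Oc_\k$ with $\alpha \equiv 1 \bmod \mf$, this follows from $\vr(\af) = \alpha^{k-1}$ together with $|\sigma(\alpha)|^2 = \Nm(\alpha)$, which in turn holds because $\sigma|_\k$ is either the inclusion or complex conjugation on $\k$, both of which preserve absolute value. The general case reduces to the principal one by choosing $h$ with $\af^h = \alpha\Oc_\k \in P_{\mf, 1}$ and taking $h$-th roots. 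Consequently, $\overline{\sigma(\vr(\af))} = \Nm(\af)^{k-1}/\sigma(\vr(\af)) = \sigma\bigl(\Nm(\af)^{k-1}/\vr(\af)\bigr)$. Specializing to $\sigma = \mathrm{id}$ shows $\Qb(\vr)$ is closed under complex conjugation; let $\tau$ denote the induced involution, so $\tau(\vr(\af)) = \Nm(\af)^{k-1}/\vr(\af)$. The displayed identity then reads $\overline{\sigma(x)} = \sigma(\tau(x))$ for every embedding $\sigma$ and every $x \in \Qb(\vr)$, which is exactly the defining condition for $\Qb(\vr)$ to be Kroneckerian. Since $\tau$ acts nontrivially on $\k \subset \Qb(\vr)$, $\Qb(\vr)$ is CM rather than totally real.

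The main difficulty I anticipate is establishing the absolute value identity uniformly in $\sigma$, so that a single involution $\tau$ encodes complex conjugation in every embedding; this rests on the uniform formula $|\sigma(\alpha)|^2 = \Nm(\alpha)$ for $\alpha \in \k$, independent of the embedding $\sigma$.
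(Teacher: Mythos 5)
Your proof is correct and, for the first assertion, follows essentially the same path as the paper: both arguments realize $\Qb(\vr)/F_\vr$ as a compositum of at-most-quadratic extensions $F_\vr(\vr(\pf))/F_\vr$, using that the trace and norm of $\vr(\pf)$ over $F_\vr$ are Fourier-coefficient data of $\vartheta_\vr$ (the paper expresses the norm as $c_p^2 - c_{p^2}$, which avoids your appeal to Ribet for $\eta(p) \in F_\vr$, and it invokes Chebotarev to restrict to unramified primes, whereas you handle ramified primes directly; both are fine). For the Kroneckerian claim the two arguments share the same key identity $|\vr(\af)^\sigma|^2 = \Nm(\af)^{k-1}$, proved by the same device of raising to a power that lands in $P_{\mf,1}$ and extracting positive real roots, but they diverge in how they conclude: the paper verifies the identity only at primes and then cites Theorem 2 of Blanksby--Loxton to deduce that each $\Qb(\vr(\pf))$ is Kroneckerian, finishing with the fact that a compositum of Kroneckerian fields is Kroneckerian; you instead prove the identity for all ideals and construct the involution $\tau(\vr(\af)) = \Nm(\af)^{k-1}/\vr(\af)$ explicitly, checking that $\overline{\sigma(x)} = \sigma(\tau(x))$ holds on generators and hence everywhere. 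Your route is more self-contained (no external citation) and gives the CM involution explicitly, which is exactly what Remark~\ref{rmk:commutec} later needs; the paper's route is shorter on the page at the cost of outsourcing the characterization of Kroneckerian fields. The one point you assert without proof --- that the existence of such a $\tau$ commuting with all embeddings is equivalent to being totally real or CM --- is standard, and is precisely the content of the cited result, so there is no gap.
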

\begin{proof}
  By the Chebotarev Density Theorem, $\Qb(\vr)$ is generated by $\vr(\pf)$ over primes $\pf \nmid \mf\df$ in $\k$. Denote $c_n$ the $n$-th Fourier coefficient of $\vartheta_\vr$.
  If $p$ is inert, then $\vr(p \Oc_\k) = c_{p^2} \in F_\vr$.
  If $p = \pf \bar\pf$ is split, then $\vr(\pf) + \vr(\bar\pf) = c_p$ and $\vr(\pf) \vr(\bar\pf) = c_p^2 - c_{p^2}$ are in $F_\vr$, which means $F_{\vr}(\vr(\pf))/F_\vr$ has degree at most 2.
  Suppose $\Qb(\vr) = F_\vr(\vr(\pf_1), \dots, \vr(\pf_r))$, then $\Qb(\vr)/F_\vr$ is the composite of the quadratic extensions $F_\vr(\vr(\pf_i))/F_\vr$, hence Galois with elementary 2-group as its Galois group.

  To prove the second claim,  let $\pf$ be an unramified prime in $\k$.
  We will show that
  \begin{equation}
    \label{eq:abs}
   |\vr(\pf)^\sigma|^2 = \Nm(\pf)^{k-1}
  \end{equation}
  for all  $\sigma \in \GQ$. Then Theorem 2 in \cite{BL78} (see also section 4(a) loc.\ cit.) implies that $\Qb(\vr(\pf))$ is Kroneckerian.  Therefore as the composite of Kroneckerian fields, $\Qb(\vr)$ is also Kroneckerian. This gives another proof that $F_\vr \subset \Qb(\vr)$ is Kroneckerian.   

  To prove \eqref{eq:abs}, suppose   $p = \pf \overline\pf$ is a split prime.
  Since $\vr(\pf)^N = \vr(\pf^N) = \lambda^{k-1}$ for some $N \in \Nb$ and $\lambda \in \k$ satisfying $\lambda \Oc_\k = \pf^N$, we have
  $$
  |\vr(\pf)^\sigma|^{2N} = (\vr(\pf)^\sigma)^{N}( \overline{\vr(\pf)^\sigma})^{N}
  = (\vr(\pf)^N)^\sigma \overline{(\vr(\pf)^N)^\sigma} = \Nm(\lambda)^{k-1} = p^{(k-1)N}.
  $$
  When $\pf = p \Oc_\k$ is inert, the claim \eqref{eq:abs} is clear since
  $
\vr(\pf) = p^{k-1} \eta(p)
$
with $\eta(p)$ a root of unity.
\end{proof}

\begin{remark}
  \label{rmk:commutec}
  Since $\Qb(\vr)$ is Kroneckerian, so is its Galois closure, and the complex multiplication is in the center of this Galois group.
  This in particular implies
  \begin{equation}
    \label{eq:commutec}
    \overline{\vr(\af)}^\sigma =     \overline{\vr(\af)^\sigma}
  \end{equation}
  for all fractional ideals $\af$ of $\k$ and $\sigma \in \GQ$.
\end{remark}
\begin{example}
\label{ex:eta8}  
Consider the newform $\eta(3\tau)^8 \in S_{4}(9)$, which has  CM by $\k = \Qb(\sqrt{-3}) = \Qb(\zeta)$ with $\zeta = \zeta_6 = e^{2\pi i /6}$.
Then by taking $\mf = \df = \sqrt{-3} \Oc_\k$, we have  $ I_\mf = P_{\mf, 1}$ and
$\vartheta_\vr(\tau) = \eta(3 \tau)^8$ with $\vr$ the Hecke character uniquely determined by
\begin{equation}
  \label{eq:vteta8}
  \vr(\alpha\Oc_\k) = \alpha^3, \quad \alpha  \in \df + 1,
\end{equation}
by \eqref{eq:prin}, and $\Qb(\vr) = F_\vr = \Qb$. 
Furthermore, we can relate $\eta(\tau)^8$ to components of vector-valued binary theta series.
Consider the lattice $(P, Q) = (\Oc_\k, \Nm)$, whose dual lattice is $P' = \df^{-1}$ with $\df  = \sqrt{-3}\Oc_\k$, and $P'/P = \df^{-1}/\Oc_k \cong \Zb/3\Zb$ via $a + b\zeta \mapsto a$. 
Then the cusp form
\begin{equation}
  \label{eq:eta8}
  \begin{split}
    \theta_{P, 4}(\tau)
    &= \sum_{\mu \in P'/P} \phi_\mu \sum_{\lambda \in P + \mu} \lambda^3 q^{Q(\lambda)}\\
&= \sum_{r \in (\Zb/3\Zb)^\times} \phi_{r} \sum_{\substack{a, b \in \Zb + \frac{r}{3}}} (a + b\zeta)^3 q^{a^2 + ab + b^2}
= \frac{\sqrt{-3}}{3} \eta(\tau)^8  (\phi_{1} - \phi_{-1})
  \end{split}
\end{equation}
generates the 1-dimensional vector space $S_{4, P}$.
\end{example}

\subsection{Binary theta series}
Now, we will write the CM form $\vartheta_\vr$ as a linear combination of binary theta functions (see \cite[Section~6]{CL20} for the case of RM forms of weight one).
It is in some sense similar to \eqref{eq:eta8}, though we will not scale the variable $\tau$.
For later purposes, we will work in the adelic setting.

For the quadratic space
\begin{equation}
  \label{eq:Ua}
(U, Q) = (U_a, Q_a) = (\k, a \Nm)  
\end{equation}
with $a \in \Qb_{>0}$, we identify $T := \GSpin(U) \cong\mathrm{Res}_{\k/\Qb} \Gm$, and
\begin{equation}
  \label{eq:SOU}
  \SO_U(R) \cong R^1 := \k^1 \otimes R, \quad \k^1 := \{z \in \k: \Nm(z) = 1\},
\end{equation}
for any $\Qb$-algebra $R$.
Under these isomorphisms, the surjective map $T \to \SO_U$ is given by
$$
\Nm^-\colon T \to \SO_U, \quad
z \mapsto  z/\bar{z},
$$
with $\bar{z}$ the Galois conjugation of $z$.
 Let $\phi^k = \phi \phi^k_\infty \in \Sc(U(\Ab)) \cong \Sc(\Ab_\k)$ be a Schwartz function such that $\phi \in \Sc(\hat \k)$ and
$$
\phi^k_\infty\colon \k \otimes \Rb \cong \Cb \to \Cb, \quad  z\mapsto  z^{k-1} e^{-2\pi z \overline z}.
$$
Then we define the binary theta series
\begin{equation}
  \label{eq:thetaUk}
      \theta_{U, k}(\tau, \phi)
      := a^{(1-k)/2}v^{-k}
\sum_{x \in U} (\omega(g_\tau) \phi^k)(x),
\end{equation}
where $\omega = \omega_a$ denotes the Weil representation of $\SL_2(\A) \times \mathrm{O}(U)(\A)$ on $\Sc(U(\A))$ (see \cite{Weil2}), and $g_\tau = \left( \begin{smallmatrix}\sqrt{v} & u/\sqrt{v} \\ 0 & 1/\sqrt{v} \end{smallmatrix}\right) \in \SL_2(\Rb) \subset \SL_2(\A)$ satisfies $g_\tau \cdot i = \tau$.
The binary theta series is in $S_{k}(\Gamma_K)$ with $\Gamma_K := K \cap \SL_2(\Qb) \subset \SL_2(\Zb)$ a congruence subgroup and 
$K \subset \SL_2(\hat\Zb)$ an open compact subgroup fixing $\phi$.


Let $\vr^{-1} \vr_\infty^k: \k^\times\backslash \Ab_\k^\times \to \Cb^\times$ be a continuous homomorphism such that
$$
\vr^k_\infty\colon (\k \otimes \Rb)^\times \cong \Cb^\times \to \Cb^\times, \quad  z \mapsto z^{k-1}. 
$$
and  $\vr \colon \hat \k^\times \to \Qbar^\times \hookrightarrow \Cb^\times$ is a continuous function.
Suppose that $\mf$ is the largest ideal such that $\vr$ is trivial on
$$
  K(\mf) :=
  \prod_{v \nmid \mf} \Oc_{\k_v}^\times
  \times  \prod_{v \mid \mf} ( 1 + \mf \Oc_{\k_v})
\subset   \hat \k^\times.
$$
Using the isomorphism
\begin{equation}
  \label{eq:Imiso}
  \begin{split}
      I_\mf &\cong \sideset{}{'}\prod_{v < \infty,~ v \nmid \mf}(\k_v^\times/\Oc_{\k_v}^\times)
=  \hat \k_{(\mf)}^\times / (  \hat \k_{(\mf)}^\times  \cap K(\mf)),\\
  \end{split}
\end{equation}
where $\hat \k_{(\mf)}^\times
:=   \sideset{}{'}\prod^{}_{v < \infty,~ v \nmid \mf}\k_v^\times \times \prod_{v \mid \mf} (1 + \mf \Oc_{\k_v})$, 
 we can view $\vr$ as a function on $I_\mf$, and a Hecke character of weight $k$ and modulus $\mf$.
Conversely, any Hecke character $\vr$ is the finite part of such a continuous homomorphism.

For an open compact subgroup $K_\f \subset \hat{\Oc}_\k^\times$, we define 
\begin{equation}
  \label{eq:ClK}
  \Cl(K_\f) :=
\k^\times \backslash \hat \k^\times /K_\f
\cong   T(\Qb)\backslash T(\hat \Qb) / K_\f,
 \end{equation}
 the (finite) class group for $K_\f$.
 The image of $\Nm^-\colon  T \to \SO_U \subset \mathrm{Res}_{\k/\Qb} \Gm$ gives rise to the subgroup 
 \begin{equation}
   \label{eq:Cl-}
   \Cl^1(K_\f)
   :=\Nm^-(\Cl(K_\f))    =  \k^1 \backslash \hat \k^1 /K^1_\f \subset \Cl(K_\f)
 \end{equation}
 with $K^1_\f:= \Nm^-(K_\f) = K_\f \cap \hat \k^1$ an open compact in $\hat \k^1$. 
 Suppose $K_\f^1 \subset \ker(\vr)$, then we have
\begin{equation}
  \label{eq:thetachi}
  \begin{split}
    \theta_\vr(\tau, \phi)
    &:= a^{(1-k)/2}v^{-k}\int_{T(\Qb)\backslash T(\hat \Qb)} \vr^{-1}\circ \Nm^-(h^{}) \theta_{U, k}(\tau, \omega_a(h) \phi) dh    \\
    &= \vol(K_\f) \sum_{h \in \Cl(K_\f)}
    \vr(h/\bar h)^{-1} \theta_{U, k}(\tau, \omega_a(h) \phi)\\
    &= \vol(K_\f) \sum_{h \in \Cl(K_\f)}
    \vr(h/\bar h)^{-1} \sum_{x \in \k} \phi((\bar h/h)  x) x^{k-1} q^{a \Nm(x)}
  \end{split}
\end{equation}
where we normalize the Haar measure such that
\begin{equation}
  \label{eq:vol}
  \vol(K_\f) = [\Cl(\hat{\Oc}_\k^\times): \Cl(K_\f)]^{-1} = [\hat{\Oc}_\k^\times : K_\f]^{-1}
  \cdot |\Oc_\k^\times|  \cdot |K_\f \cap \Oc_\k^\times|^{-1}.
\end{equation}

Suppose we take $\phi = \phi_{\mu}$ as in \eqref{eq:phimu} for $P \subset U(\Qb)$ a lattice and $\mu \in P'/P$, then for $h \in \SO(U)(\Ab)$
\begin{equation}
  \label{eq:thetaP}
  \Theta_{P}(\tau, h)
  :=  \sum_{\mu \in L'/L} \theta_{U, k}(\tau, \omega(h) \phi_\mu)\phi_\mu
\end{equation}
is a vector-valued cusp form in $S_{k, P}$.
The same holds for
\begin{equation}
  \label{eq:thetachiP}
  \begin{split}
    \Theta_{ \vr, P}(\tau)
    &=  \sum_{\mu \in P'/P}\theta_{\vr, P+\mu}(\tau)\phi_\mu,\\
    \theta_{\vr, P + \mu}(\tau) &:= \theta_{\vr}(\tau  , \phi_\mu)
    = \vol(K_\f) \sum_{h \in \Cl(K_\f)} \vr(\bar h/h)  \Theta_{P}(\tau, h, \phi_\mu).
  \end{split}
\end{equation}
Note that $\Theta_{P}(\tau, h)$ and $\Theta_{h \cdot P}(\tau, 1)$ have the same components, which could be permuted.

For a modulus $\mf \subset \Oc_\k$, denote $\Cl(\mf) := \Cl(K(\mf)) := \k^\times \backslash \Ab_\k^\times / K(\mf)$.
Then \eqref{eq:Imiso} gives us
$\Cl(\mf) \cong I_\mf/P_{\mf, 1}$.
Fix $t_1, \dots, t_\ell, h_1, \dots, h_m \in \hat \k^\times_{(\mf)}$ such that
\begin{equation}
  \label{eq:tihj}
\{t_i \Nm^-(h_j): 1 \le i \le \ell,~ 1 \le j \le m\}  
\end{equation}
is a set of representatives of $\Cl(\mf)$.
Let $\af_i, \bfrak_j \in I_\mf$ be the fractional ideals corresponding to $t_i$ and $h_j$ respectively under \eqref{eq:Imiso}.
Then  $a_i := \Nm(\af_i)^{} \in \Qb_{> 0}$ satisfies $ \Nm( t_i)/a_i \in \hat\Zb^\times$ for all $1 \le i \le \ell$.
For any fixed $\mu \in (\Oc_\k/\mf)^\times$, the map
\begin{equation}
  \label{eq:map1}
  \begin{split}
  \left\{(\af_i, \bfrak_j, \cf): 1 \le i \le \ell, 1 \le j \le m, \cf \in P_{\mf, \mu}, \cf \subset \af_i \Nm^-(\bfrak_j)
\right\}
&\to I_\mf\\
(\af_i, \bfrak_j, \cf) &\mapsto  \cf/(\af_{i}\Nm^-(\bfrak_{j}))
\end{split}
\end{equation}
is an injection
with image being the subset of integral ideals in $I_\mf$.
%

Consider the following even integral lattice in $U_{1/a_i}$.
\begin{equation}
  \label{eq:Pj}
  (P_i, Q_i) := \lp \mf \af_i, \frac{1}{a_i} \Nm\rp,
\end{equation}
which has level $N = DM$ with dual lattice $P_i' = \overline\mf^{-1} \df^{-1}\af_i$.
We can express the CM newform $\vartheta_\vr$ from \eqref{eq:vartheta} in terms of the components of binary theta series attached to these lattices.
\begin{proposition}
  \label{prop:vv2sc}
Let $\vr$ be a Hecke character of modulus $\mf$ and weight $k$. 
Then in the notations above, we have
\begin{equation}
  \label{eq:vv2sc2}
  \begin{split}
    \vartheta_\vr(\tau)
    &=
      \frac{1}{ |\Oc_\k^\times|}
  \sum_{\substack{1 \le i  \le \ell\\ \mu \in (\Oc_\k/\mf)^\times}}
  \vr(t_{i })^{-1}
  \eta(\mu) \theta_{\vr, P_{i } + \mu}(\tau).
  \end{split}
\end{equation}

\end{proposition}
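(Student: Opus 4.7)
The plan is to match Fourier coefficients on both sides. The left-hand side is
\[
\vt_\vr(\tau) = \sum_{\substack{\af \subset \Oc_\k \\ \af \in I_\mf}} \vr(\af)\, q^{\Nm(\af)},
\]
so it suffices to show that each integral ideal $\af \in I_\mf$ contributes exactly $\vr(\af) q^{\Nm(\af)}$ to the right-hand side.

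The first step is to substitute the definition \eqref{eq:thetachi} of each $\theta_{\vr, P_i + \mu}$, taking $K_\f = K(\mf)$ so that $\Cl(K_\f) = \Cl(\mf)$. This expresses the right-hand side as a multiple sum indexed by $1 \le i \le \ell$, $\mu \in (\Oc_\k/\mf)^\times$, $h \in \Cl(\mf)$, and $x \in \k$, subject to the adelic coset condition $\phi_\mu((\bar h/h)x) = 1$ and carrying monomial $x^{k-1} q^{\Nm(x)/a_i}$.

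Next I would parametrize $h \in \Cl(\mf)$ by the representatives $t_{i'} \Nm^-(h_j)$ from \eqref{eq:tihj}, and use \eqref{eq:Imiso} to translate the local coset condition, at every finite place, into the global ideal-theoretic statement that $x\Oc_\k \in P_{\mf, \mu}$ and $x\Oc_\k \subset \af_i \Nm^-(\bfrak_j)$. This is precisely the data parametrized by the bijection \eqref{eq:map1}, whose image is the set of integral ideals in $I_\mf$. Consequently, the multiple sum reorganizes as a sum over integral ideals $\af = x\Oc_\k \cdot (\af_i \Nm^-(\bfrak_j))^{-1} \in I_\mf$, with each $\af$ arising with $|\Oc_\k^\times|$-fold multiplicity corresponding to the ambiguity in the choice of generator $x$ of $x\Oc_\k$. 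For each such $\af$, multiplicativity of $\vr$ together with \eqref{eq:prin} gives
\[
\vr(\af) = \vr(x\Oc_\k)\, \vr(\af_i)^{-1}\, \vr(\Nm^-(\bfrak_j))^{-1} = x^{k-1} \eta(\mu)\, \vr(t_i)^{-1}\, \vr(h_j/\bar h_j)^{-1},
\]
which matches exactly the product of character factors appearing on the right-hand side of \eqref{eq:vv2sc2}: $\vr(t_i)^{-1}\eta(\mu)$ from the outer sum, $\vr(h/\bar h)^{-1}$ from the inner average in \eqref{eq:thetachi}, and $x^{k-1}$ from the binary theta series itself. The normalization \eqref{eq:vol} of $\vol(K(\mf))$ then absorbs the cardinality of $\Cl(\mf)$, while the prefactor $1/|\Oc_\k^\times|$ in \eqref{eq:vv2sc2} cancels the unit overcounting, producing the desired coefficient $\vr(\af)$.

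The main obstacle will be this second step: rigorously translating the adelic Schwartz condition $\phi_\mu((\bar h/h)x) = 1$ into the global ideal-theoretic data of \eqref{eq:map1}. The subtlety is that the twist by $\bar h/h \in \hat\k^1$ preserves each local component of the lattice $\hat P_i$ (being a local unit everywhere), yet its decomposition under \eqref{eq:Imiso} is what globally determines the ideal class in which the resulting $x\Oc_\k$ must lie. Verifying that this matching is a bijection onto integral ideals in $I_\mf$, and that the outer index $i$ in the proposition aligns with the class of $\af$ modulo $\Cl^1(\mf)$ while the inner representative $\Nm^-(h_j)$ aligns with the fiber over that coset, is the technical heart of the argument; the character multiplicativity computation in the third step is then routine.
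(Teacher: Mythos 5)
Your proposal is correct and takes essentially the same route as the paper, which runs the identical computation in the opposite direction: it starts from the ideal sum defining $\vartheta_\vr$, splits it using the injection \eqref{eq:map1}, rewrites each piece as a lattice sum over $h_j\cdot(P_i+\mu)=P_{i,j}+\mu$ (equation \eqref{eq:coset2}), and then averages over $\mu\in(\Oc_\k/\mf)^\times$ to recognize the theta functions and produce the constant via \eqref{eq:vol}. The one imprecision is in your bookkeeping of constants: the $|\Oc_\k^\times|$-fold multiplicity arises from the pairs $(\mu,x)$ --- for a fixed $\mu$ only the $|\Oc_\k^\times\cap K(\mf)|$ generators congruent to $\mu$ lie in the relevant coset --- and $\vol(K(\mf))$ absorbs $|(\Oc_\k/\mf)^\times|$ together with the unit indices rather than the cardinality of $\Cl(\mf)$, exactly as in the last lines of the paper's proof.
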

\begin{proof}
  For $h_j$ as in \eqref{eq:tihj}, we denote
\begin{equation}
  \label{eq:Pij}
  P_{i, j} := h_j \cdot P_i = \mf \af_i \Nm^-(\bfrak_j).
\end{equation}
The inclusion $P_{i, j} \subset \k$ identifies
\begin{equation}
  \label{eq:Pijid}
P_{i, j}'/P_{i, j} \cong   (\overline\mf \df)^{-1}/\mf.
\end{equation}
For fixed $i$, this identification gives an isometry between the finite quadratic modules $P_{i, j}'/P_{i, j}$ and $P_{i, j'}'/P_{i, j'}$ for $1 \le j, j' \le m$.
Furthermore, it preserves the subsets
\begin{equation}
  \label{eq:Pjisom}
(  \Oc_\k /\mf)^\times \subset  \Oc_\k /\mf.
\end{equation}
Since $h_j \in \hat \k^\times_{(\mf)}$, we have
\begin{equation}
  \label{eq:coset2}
  h_j \cdot (P_i + \mu) = P_{i, j} + \mu
\end{equation}
 for all $1 \le i \le \ell, 1 \le j \le m$ and $\mu \in \Oc_\k/\mf$.

We can now apply the map \eqref{eq:map1} and write 
\begin{align*}
  \vartheta_\vr(\tau)
  &=
    \sum_{\substack{1 \le i  \le \ell\\ 1 \le j  \le m}} {\vr}^{-1}( \af_{i } \Nm^-(\bfrak_{j }))^{}
  \sum_{\cf \subset \af_{i } \Nm^-(\bfrak_{j }),~ \cf \in P_{\mf, 1}}
  \vr(\cf)
  q^{ \Nm(\lambda)/ a_{i }}\\
  &= 
    \frac{\eta(\epsilon)}{ |\Oc_\k^\times \cap K(\mf)|}
    \sum_{\substack{1 \le i  \le \ell\\ 1 \le j  \le m}} \vr(t_{i })^{-1} {\vr}(\Nm^-(h_{j }))^{-1}
    \sum_{x \in  P_{i, j } + \epsilon}  x^{k-1} q^{ Q_{i }(x)}
\end{align*}
for any $\epsilon \in \Oc_\k^\times$.

More generally for any $\mu \in (\Oc_\k/\mf)^\times$, we have
\begin{equation}
  \label{eq:coset}
  \vartheta_\vr(\tau)
  =
    \frac{\eta(\mu)}{ |\Oc_\k^\times \cap K(\mf)|}
    \sum_{\substack{1 \le i  \le \ell\\ 1 \le j  \le m}} \vr(t_{i })^{-1} {\vr}(\Nm^-(h_{j }))^{-1}
    \sum_{x \in  h_j \cdot (P_{i } + \mu)}  x^{k-1} q^{ Q_{i }(x)}
  \end{equation}
by \eqref{eq:coset2}. Averaging over $\mu \in (\Oc_\k/\mf)^\times$, we have
\begin{equation*}
  \vartheta_\vr(\tau) =
  \frac{1}{ |\Oc_\k^\times \cap K(\mf)| \cdot  | (\Oc_\k/\mf)^\times| \cdot \vol(K(\mf)) }
  \sum_{1 \le i  \le \ell} \vr(t_{i })^{-1}
\sum_{\mu \in (\Oc_\k/\mf)^\times}\eta(\mu) \theta_{\vr, P_{i } + \mu}(\tau).
\end{equation*}
Since $  | (\Oc_\k/\mf)^\times|  = [\hat{\Oc}_\k^\times: K(\mf)]$, we can apply \eqref{eq:vol} to write
$|\Oc_\k^\times \cap K(\mf)| \cdot \vol(K(\mf)) \cdot |\Oc_\mf^\times| = |\Oc_\k^\times|$, giving us \eqref{eq:vv2sc2}.
\end{proof}

\begin{example}
  \label{ex:eta8a}
  Let $\vr$ be the Hecke character in Example \ref{ex:eta8}.
  Then $\Cl(\mf) = \Cl(\df)$ is trivial and the set in \eqref{eq:tihj} has size 1.
  So we have a single lattice $P = (\df, \Nm)$, whose dual lattice is $\df^{-2}$. 
  Proposition~\ref{prop:vv2sc} tells us that
  \begin{align*}
    \eta(3\tau)^8
    &= \vartheta_\vr(\tau) = (\theta_{\vr, P + 1}(\tau) - \theta_{\vr, P - 1}(\tau))/6
    =  \lp\sum_{x \in \df + 1} x^3 q^{\Nm(x)} -  \sum_{x \in \df - 1} x^3 q^{\Nm(x)}\rp/6,
  \end{align*}
  where we have used  $    \vol(K(\mf)) = 1$ and $|\Oc_\k^\times| = 6$.
\end{example}
\subsection{Quadratic spaces and theta functions}
Let $(V, Q)$ be a rational quadratic space of signature $(1, 2)$, and identify  $$
V(\Rb) \cong M_2(\R)^0 = \left\{\begin{pmatrix}x_3/2 & x_1 \\ x_2 & -x_3/2\end{pmatrix} \in M_2(\Rb): x_i \in \Rb\right\}
$$
with determinant as the quadratic form.
Then $H(\Rb) \cong \mathrm{GL}_2(\Rb)$ via the conjugation action, where $H := \GSpin(V)$.
  For $z = x +iy \in \H$, define
  $$
h_z := n(x)m(\sqrt{y}) \in \SL_2(\Rb) \subset \GL_2(\Rb) \cong H(\Rb)
$$
with $n(b) := \smat{1}{b}{}{1}, m(a) := \smat{a}{}{}{a^{-1}}$, which  satisfies  $h_z \cdot i = z$. 
Let $\Db$ be the Grassmannian of oriented negative two-planes in $V(\Rb)$.
It has two connected components $\Db^\pm$ with $\Db^+$ containing $Z_0 :=  \Rb \smat{}{1}{1}{} \oplus \Rb \smat{ 1}{}{}{- 1}$.
We can identify $\H$ with $\Db^+$ via
\begin{align}\label{identification DH}
  z &\mapsto h_z \cdot Z_0
=       \R \Re \begin{pmatrix}z & -z^2 \\ 1 & -z \end{pmatrix} \oplus \R \Im \begin{pmatrix}z & -z^2 \\ 1 & -z \end{pmatrix}.
\end{align}
Note that for any $\gamma = \smat{*}{*}{c}{d} \in \SL_2(\Rb)$, we have 
\begin{equation}
  \label{eq:decompose}
  h_{\gamma \cdot z} = \gamma h_z \kappa(\theta(\gamma, z)), \quad \kappa(\theta) := \smat{\cos\theta}{-\sin\theta}{\cos\theta}{\sin\theta}, \quad
  e^{i \theta(\gamma, z)} = |cz + d|/(cz+d).
\end{equation}
For any compact open subgroup $K \subset H(\A_f)$, the associated Shimura variety
\begin{equation}
  \label{eq:XK}
X_K := H(\Qb) \backslash \Db \times H(\A_f)/K
\end{equation}
is a disjoint union of modular (resp.\ Shimura) curves when $V$ is isotropic (resp.\ anisotropic).

\begin{example}
  \label{ex:XN}
  Let $N$ be a positive integer and consider the rational quadratic space
  \begin{equation}
    \label{eq:VN}
  (V, Q) = ( M_2(\Q)^0, N\det(x))
  \end{equation}
  of  signature $(1,2)$. 
  The corresponding bilinear form is given by $(x,y) = -N\tr(xy)$.
  For each prime $p$ let
\[
K_p(N) = \left\{\begin{pmatrix}a & b \\ c & d \end{pmatrix} \in \GL_2(\Z_p): c \in N \Z_p\right\}
\]
and let $K(N) := \prod_p K_p(N)$. Then $K(N) \subset H(\A_f)$ is a compact open subgroup and satisfies  $H(\A_f) = H(\Q)K(N)$ by strong approximation (see Theorem 3.3.1 in \cite{bump}).
Furthermore, $K(N) \cap H(\Qb) = \Gamma_0(N) \times \{\smat{-1}{}{}{1}\}$ and
$$
\Gamma_0(N) \backslash \Hb \cong X_{K(N)}, \quad z \mapsto [z, 1].
$$
\end{example}

Let $\omega = \otimes \omega_p$ be the Weil representation of $(G\times H)(\Ab)$ on $\Sc(V(\Ab))$, and define
  $$
  \theta_{}(\tau, (z, h), \varphi) = 
  \sum_{x \in V(\Q)}
(\omega(g_\tau, h_z) \varphi)(h^{-1}x)
$$
for $\varphi \in \Sc(V(\Ab))$. 
When $\varphi = \varphi_k = \varphi_f \varphi_{\infty,k} \in \Sc(V(\A))$ with $k \in \Nb$ and
\begin{equation}
  \label{eq:varphik}
  \varphi_{\infty,k}(x_1, x_2, x_3) := (x_2 - ix_3)^k e^{-\pi (x_1^2  + x_2^2 + x_3^2 )} \in \Sc(V(\Rb)),
\end{equation}
we denote
\begin{equation}
  \label{eq:thetak}
  \theta_{k}(\tau, (z, h), \varphi_f) := \sqrt{v}^{2k + \frac{1}{2}}\theta(\tau, (z, h), \varphi_{k})
\end{equation}
which has weight $2k$ in $z$.

For an even, integral lattice $L \subset V$, 
let $K(L) \subset H(\A_f)$ be the largest open compact subgroup preserving $\hat L$ and acting trivially on $\hat L'/\hat L \cong L'/L$.
If $K$ is contained in $K(L)$, then we denote
\begin{equation}
  \label{eq:thetaLk}
  \begin{split}
    \Theta_{L,k}(\tau, z, h) &:= \sum_{\mu \in L'/L} \theta_k (\tau, (z, h), \phi_\mu)  \phi_\mu\\
&=
    v^{k+1} y^{-2k}
    \sum_{\mu \in L'/L}
    \sum_{\lambda \in h(L+\mu)}
\left(\lambda,\begin{pmatrix}-\overline z & \overline z^2 \\ -1 & \overline z \end{pmatrix} \right)^{k}\ebf(Q(\lambda_{z^\perp})\tau + Q(\lambda_{z})\overline{\tau})\phi_{\mu}
  \end{split}
\end{equation}
 the vector-valued theta kernel with $\phi_\mu$ as in \eqref{eq:phimu} with $[z, h] \in X_K$. Here $\lambda_z$ denotes the orthogonal projection of $\lambda$ to the negative plane corresponding $z$, and $\lambda_{z^\perp}$ denotes the projection of $\lambda$ to the orthogonal complement in $V(\R)$ of the plane corresponding to $z$.
By \cite[Theorem~4.1]{borcherds} the theta function $\Theta_{L,k}(\tau,z,h)$ has weight $-k-\frac{1}{2}$ in $\tau$ for $\rho_L$.
We let $f \in M_{k+\frac{1}{2},L}^!$ be a weakly holomorphic modular form of weight $k+\frac{1}{2}$ for $\rho_L$. Its Borcherds-Shimura theta lift is defined as
\[
  \Phi_{L,k}(f,z,h) = \left\langle f(\tau),v^{-k-\frac{1}{2}}\overline{\Theta_{L,k}(\tau,z,h)}\right\rangle_{\Pet}
\]
with the regularized Petersson inner product as defined in \eqref{petersson inner product}.
Using the Fourier expansion computed by Borcherds in  Theorem 14.3 of \cite{borcherds}, we have the following result.

\begin{theorem}\label{shimura lift}
  The Borcherds-Shimura theta lift $\Phi_{L,k}(f,z,h)$ is a meromorphic modular form of weight $2k$ on $X_K$ and has the same field of definition as $f$.
Furthermore for any $\sigma \in \Aut(\Cb/\Qb)$, we have $\Phi_{L, k}(f, \cdot)^\sigma = \Phi_{L, k}(f^\sigma, \cdot)$. 
\end{theorem}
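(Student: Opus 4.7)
The plan is to deduce all three claims from the explicit Fourier expansion of $\Phi_{L,k}(f,z,h)$ provided by Borcherds' Theorem~14.3 of \cite{borcherds}, together with the transformation properties of the theta kernel $\Theta_{L,k}$.

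First, I would verify the modular transformation law. The theta kernel $\Theta_{L,k}(\tau,z,h)$ transforms with weight $-k-\tfrac{1}{2}$ in $\tau$ for $\rho_L$ and carries weight $2k$ in $z$ under $H(\Qb)$ by the Weil-representation construction in \cite[Theorem~4.1]{borcherds}, while being $K$-invariant in $h$. Since $f$ has weight $k+\tfrac{1}{2}$ for $\rho_L$, the bilinear pairing in the integrand of the regularized Petersson product is $\SL_2(\Zb)$-invariant, so Borcherds' regularization is well-defined. The resulting function of $(z,h)$ inherits weight $2k$ and $K$-invariance, hence descends to a section of the appropriate automorphic line bundle on $X_K$.

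Next, I would invoke Borcherds' Theorem~14.3 in its vector-valued, higher-weight form to compute the Fourier expansion of $\Phi_{L,k}(f,\cdot)$ at each cusp (and the analogous local expansion at CM points when $V$ is anisotropic). The key structural feature of this formula is that every Fourier coefficient of the lift is an explicit $\Qb$-linear combination of the principal-part coefficients $c_f^+(m,\mu)$ of $f$, with arithmetic weights depending only on $(L,k,m,\mu)$ and not on $f$. This simultaneously produces meromorphy---the series converges away from the divisor determined by the principal part---and shows that the Fourier coefficients of $\Phi_{L,k}(f,\cdot)$ lie in the field generated by those of $f$. Running the same argument with $f$ replaced by $f^\sigma$ for varying $\sigma\in\Aut(\Cb/\Qb)$ pins down the field of definition precisely.

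Finally, the Galois equivariance $\Phi_{L,k}(f,\cdot)^\sigma=\Phi_{L,k}(f^\sigma,\cdot)$ is then formal: applying $\sigma$ to each Fourier coefficient of the lift is, by $\Qb$-linearity, the same as first $\sigma$-conjugating the coefficients of $f$ and then forming the lift. The step I expect to require the most care is verifying the $\Qb$-rationality of the arithmetic weights appearing in Borcherds' expansion in this vector-valued setting: individually, the matrix coefficients of $\rho_L$ lie in $\Qb(\zeta_N)$ by \eqref{eq:gammaa}, so one must organize Borcherds' computation so that, after summing over $L'/L$ (using the Galois-twist identity \eqref{eq:gammaa} together with the trace-type averaging in \eqref{eq:tr}), the cyclotomic contributions cancel and the surviving weights are rational.
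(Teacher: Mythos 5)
Your overall strategy for the isotropic case---read off rationality from Borcherds' Theorem~14.3 and propagate it through the construction---is the starting point of the paper's proof as well (after first reducing, via McGraw's rational basis of $M^!_{k+\frac12,L}$, to the case where $f$ itself has rational coefficients, which sidesteps your worry about cyclotomic matrix coefficients of $\rho_L$: the weights $n^{k}d^{-k}$ in Theorem~14.3 are already rational and no trace-averaging is needed). But there are two genuine gaps. First, knowing that the Fourier coefficients of $\Phi_{L,k}(f,\cdot)$ at one cusp lie in $\Qb$ is not the same as knowing that $\Phi_{L,k}(f,\cdot)$ is \emph{defined over} $\Qb$ as a section of the weight-$2k$ automorphic line bundle on the canonical model of $X_K$, which is what the theorem asserts and what is needed later (the values at CM points are fed into the reciprocity law \eqref{eq:rec}, which only applies to objects defined over $\Qbar$ on the canonical model). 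Bridging analytic $q$-expansions and the algebraic structure is exactly the content of the $q$-expansion principle; the paper imports this from Howard--Madapusi-Pera (Propositions~4.6.3 and~5.4.2 of \cite{HMP20}). Likewise, the Galois equivariance for an arbitrary $\sigma\in\Aut(\Cb/\Qb)$ is an assertion about the algebraic section, not merely about formal power series, so it is not ``formal'' without that input. (A small misstatement along the way: the coefficients of the lift in Theorem~14.3 involve \emph{all} Fourier coefficients of $f$, not only the principal part---see \eqref{theta lift fourier expansion}---though this does not affect the rationality argument.)

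Second, and more seriously, your treatment of the anisotropic case does not work. When $V$ is anisotropic, $X_K$ is a compact Shimura curve with no cusps, so Borcherds' Theorem~14.3 is simply unavailable, and a ``local expansion at a CM point'' carries no rationality information by itself: its coefficients are periods of the complex uniformization, and extracting algebraicity from them presupposes the very rational structure you are trying to establish. The paper instead handles this case by the algebraic embedding trick of \cite{HMP20} (embedding into a larger quadratic space where a cusp is available and descending). This case cannot be dismissed as peripheral: in the application in Proposition~\ref{prop:algGal} the relevant signature $(1,2)$ lattice is $\Zb[6]\oplus P^-$, which is anisotropic in the worked example of Section~\ref{sec:eta8}.
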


\begin{proof}
  Since $M^!_{k+\frac{1}{2}, L}$ has a basis of forms with rational Fourier coefficients \cite{mcgraw}, we can suppose that $f$ has rational Fourier coefficients.
  When $V$ is isotropic, Borcherds calculated the Fourier expansion of $\Phi_{L, k}$ in Theorem 14.3 of \cite{borcherds}. 
  The proof of the claim is then analogous to the case of Borcherds products, i.e.\ $k = 0$, studied by Howard and Madapusi-Pera \cite{HMP20},  and follows from the $q$-expansion principle (see Propositions 4.6.3 and 5.4.2 loc.\ cit.).
  In fact, the case of $k \ge 1$ is easier since there is no need to normalize $\Phi_{L, k}$ and its Fourier expansions by constants.
  When $V$ is anisotropic, one can apply the algebraic embedding trick developed in \cite{HMP20} to conclude the result (see sections 6 and 9 loc.\ cit.\ for details). 
\end{proof}

For a negative definite subspace $U^- \subset V$, we have two points $\{z_U^\pm \} \subset \Db$ given by $U^-(\Rb) \subset V(\Rb)$ with two possible orientations.
The group $T = \GSpin(U^-) = \GSpin(U)$ is isomorphic to $\Res_{\k/\Qb} \Gm$ for an imaginary quadratic field $\k$, and embeds into $H$, which gives us a CM cycle
\begin{equation}
  \label{eq:ZU}
Z(U) = T(\Qb) \backslash (\{z_U^\pm \times T(\hat \Qb)/(K \cap T(\hat \Qb))) \to X_K.   
\end{equation}
  For any $h \in \k^\times \backslash \hat \k^\times$, let $\sigma_h \in \Gal(\k^{\mathrm{ab}}/\k)$ be the image under the Artin map.
  Let $\Psi$ be  a meromorphic function on $X_K$ defined over $\Qbar$ and $\mathrm{Div}( \Psi)\cap Z(U) = \emptyset$,
  By the theory of complex multiplication (see \cite[Theorem 6.31]{shimauto}),   we have $\Psi(z_0, h') \in \Qbar$ for all   $h' \in \hat \k^\times$.
  Furthermore,
\begin{equation}
  \label{eq:rec}
  (\Psi(z_0, h'))^{  \sigma_{h}} =   \Psi^\sigma(z_0, h^{-1}h'),~
  (\Psi(z_0, h'))^\c =   \overline{\Psi}(z_0, \overline{h'})
\end{equation}
for all
 $\sigma \in \Gal(\Qbar/\k)$ whose image under $\Gal(\Qbar/\k) \to \Gal(\k^{\mathrm{ab}}/\k)$ is $\sigma_h$. 
This also generalizes to the case when $\Psi$ is a ratio of nearly holomorphic modular forms \cite{shimura75}.

\section{Main Results}
\subsection{Algebraicity of regularized inner products.}
\label{subsec:algPet}
In this section, we will prove a result about algebraicity and Galois action on ratios of certain regularized Petersson inner products involving binary theta series.
This result plays an important role in the proof of Theorem~\ref{thm:Gal} since these inner products will appear as the Fourier coefficients of the holomorphic part of the harmonic Maass forms in Theorem~\ref{thm:hmfchi} below.

\begin{proposition}
  \label{prop:algGal}
  Let $\vr$ be a Hecke character of weight $k$ for an imaginary quadratic field $\k$.
  For a lattice $P$ in a quadratic space $U$ as in \eqref{eq:Ua}, let $\Theta_{\vr, P} \in S_{k, P}$ be the vector-valued theta series in \eqref{eq:thetachiP}.
  For any $f \in M^!_{k, P}$ defined over $\Qbar$, the ratio $        \frac{  \langle f(\tau), \Theta_{ \vr, P}(\tau) \rangle_{\Pet}}{  \| \Theta_{\vr, P}(\tau) \|^2_{\Pet}}$ is in $\Qbar$.
  Furthermore, 
  \begin{equation}
  \label{eq:Petratio2}
  \begin{split}
        \lp \frac{  \langle f(\tau), \Theta_{ \vr, P}(\tau) \rangle_{\Pet}}{  \| \Theta_{\vr, P}(\tau) \|^2_{\Pet}} \rp^{    \sigma_{}}
&=      \frac{  \langle f^\sigma(\tau), \Theta_{ \vr^\sigma, P}(\tau) \rangle_{\Pet}}{  \| \Theta_{\vr^\sigma, P}(\tau) \|^2_{\Pet}} 
  \end{split}
\end{equation}
for any $\sigma \in \Gal(\Qbar/\Qb)$.
\end{proposition}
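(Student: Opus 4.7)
The strategy is to express both $\langle f, \Theta_{\vr, P}\rangle_{\Pet}$ and $\|\Theta_{\vr, P}\|^2_{\Pet}$ as $\vr$-weighted sums of values at CM points of meromorphic modular forms of weight $2k$ on a Shimura curve, then invoke the theory of complex multiplication. First, embed the negative-definite 2-plane $U$ into a rational quadratic space $V$ of signature $(1,2)$ by taking $V = U\oplus W$ for some one-dimensional positive-definite space $W$, and extend $P$ to an even integral lattice $L = P \oplus W_0 \subset V$. The point $z_U\in\Db$ corresponding to $U(\Rb)$ is then a CM point in the sense of \eqref{eq:ZU}, and at $z_U$ the signature $(1,2)$ theta kernel $\Theta_{L,k}(\tau, z_U, h)$ factors as the product of the binary theta kernel $\Theta_P(\tau, h)$ of \eqref{eq:thetaP} with the unary theta $\theta_{W_0}(\tau)$ of weight $\tfrac 1 2$. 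For any $g\in M^!_{k, P}$ defined over $\Qbar$, set $F_g := g\otimes \theta_{W_0}\in M^!_{k+\frac 1 2, L}$, which is again defined over $\Qbar$. A standard seesaw/unfolding computation (compare the proof of Theorem~2.14 in \cite{ES18} and the analogous identities in \cite{lischwagenscheidt}), together with the adelic definition \eqref{eq:thetachiP} of $\Theta_{\vr, P}$, produces the identity
\[
\langle g, \Theta_{\vr, P}\rangle_{\Pet} = C\cdot \sum_{h\in\Cl(K_\f)} \vr(\bar h/h)\,\Phi_{L, k}(F_g, z_U, h),
\]
with a nonzero constant $C$ depending only on $L$ and the Haar normalization \eqref{eq:vol}, hence independent of $g$.

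Applying this identity with $g = f$ and with $g = \Theta_{\vr, P}\in S_{k, P}\subset M^!_{k, P}$ cancels $C$ in the ratio, yielding
\[
\frac{\langle f,\Theta_{\vr, P}\rangle_{\Pet}}{\|\Theta_{\vr, P}\|^2_{\Pet}} = \frac{\sum_h \vr(\bar h/h)\,\Phi_{L, k}(F_f, z_U, h)}{\sum_h \vr(\bar h/h)\,\Phi_{L, k}(F_{\Theta_{\vr, P}}, z_U, h)}.
\]
By Theorem~\ref{shimura lift}, both $\Phi_{L, k}(F_f)$ and $\Phi_{L, k}(F_{\Theta_{\vr, P}})$ are meromorphic modular forms of weight $2k$ on $X_K$ defined over $\Qbar$ (the latter because $\Theta_{\vr, P}$ has coefficients in $\Qb(\vr)\subset \Qbar$), and their values at the CM point $z_U$ are algebraic by \eqref{eq:rec}. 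This shows that the ratio lies in $\Qbar$. The Galois equivariance \eqref{eq:Petratio2} follows by applying $\sigma\in\GQ$ to both sides: the Galois compatibility of $\Phi_{L, k}$ in Theorem~\ref{shimura lift} gives $\Phi_{L, k}(F_g)^\sigma = \Phi_{L, k}(F_{g^\sigma})$, the reciprocity law \eqref{eq:rec} controls the effect of $\sigma$ on the CM value by a translation $h\mapsto \sigma(h)$ (or $h\mapsto \bar h$ when $\sigma$ acts nontrivially on $\k$), Remark~\ref{rmk:commutec} lets $\sigma$ commute past the complex conjugation $h\mapsto \bar h$ inside $\vr(\bar h/h)$, and the definition \eqref{eq:chisigc} of $\vr^\sigma$ converts $\vr(\cdot)^\sigma$ into $\vr^\sigma(\cdot)$ after a re-indexing of the sum. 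Together with the vector-valued analogue of \eqref{eq:chi-chic}, namely $\Theta_{\vr, P}^\sigma = \Theta_{\vr^\sigma, P}$, these transform the displayed ratio into the one corresponding to the pair $(\vr^\sigma, f^\sigma)$, which is \eqref{eq:Petratio2}.

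The main obstacle is the first step, i.e.\ establishing the seesaw identity above with the correct bookkeeping of volume constants and of the unary theta factor $\theta_{W_0}$, so that (a) the constant $C$ is genuinely independent of $g$ and cancels cleanly in the ratio, and (b) the auxiliary form $F_g$ remains defined over $\Qbar$. A secondary subtlety appears in the Galois step: when $\sigma$ restricts nontrivially to $\k$, the orientation of $U(\Rb)$, and hence the CM point $z_U\in\Db^\pm$, is swapped with its complex conjugate, and one must verify that this swap is exactly compensated by the complex conjugation built into $\vr^\c$ via \eqref{eq:chic} and \eqref{eq:chisigc}.
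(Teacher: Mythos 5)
Your overall architecture --- embed $P$ into a signature $(1,2)$ lattice, express both inner products as $\vr$-weighted CM values of Borcherds--Shimura lifts, and then apply Theorem~\ref{shimura lift}, the reciprocity law \eqref{eq:rec} and Remark~\ref{rmk:commutec} --- is exactly the paper's, and your algebraicity and Galois steps are essentially its argument. The gap is in the seesaw identity itself, which you rightly flag as the main obstacle but which fails for the auxiliary form you chose. If $F_g = g\otimes\theta_{W_0}$ with $\theta_{W_0}$ the weight $\tfrac12$ unary theta, then $F_g$ has weight $k+\tfrac12$ and feeds into $\Phi_{L,k}$, whose kernel $\Theta_{L,k}(\tau,z_U,h)$ splits at the CM point as a constant times $\theta_{W_0}(\tau)\otimes v^{k+1}\overline{\Theta_{P,k+1}(\tau,h)}$ --- compare \eqref{splitting theta}, where $\Theta_{L,3}$ produces the weight-$4$ binary theta. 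Unfolding therefore gives
\[
\Phi_{L,k}(F_g,z_U,h) \doteq \int^{\reg}\left\langle g,\overline{\Theta_{P,k+1}(\cdot,h)}\right\rangle\,\langle\theta_{W_0},\theta_{W_0}\rangle\, v^{k+1}\,\frac{du\,dv}{v^2},
\]
in which two things go wrong at once: the binary theta that appears has weight $k+1$ rather than $k$, and the unary factor $\langle\theta_{W_0},\theta_{W_0}\rangle=\sum_\mu\theta_{W_0,\mu}^2$ is a nonconstant holomorphic form of weight one sitting \emph{inside} the integral. So your $C$ is not a constant, it does not factor out of the regularized integral, the right-hand side is not $C\langle g,\Theta_{\vr,P}\rangle_\Pet$ for any scalar $C$, and the offending factor does not cancel in the ratio either, since it lives inside two different integrals.

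The repair is to tensor with a weight $-\tfrac12$ \emph{weakly holomorphic} form for the unary lattice whose pairing with the unary theta is a nonzero constant, rather than with the unary theta itself. The paper takes $N=(\Zb,6x^2)$ and $f_\eta := f\otimes\eta(\tau)^{-1}\phi_\eta$, using $\langle\theta_N,\phi_\eta\rangle = 2\eta$ so that the unary contribution to the seesaw is literally the constant $2$; then $f_\eta$ has weight $k-\tfrac12$, the relevant lift is $\Phi_{L,k-1}$ of weight $2k-2$ (not $2k$), its kernel splits at $z_0$ into $\theta_N\otimes v^{k}\overline{\Theta_{P,k}}$, and one obtains exactly $\langle f,\Theta_{P}(\cdot,h)\rangle_\Pet$ up to an explicit constant independent of $f$. (The numerical section does the same with the form $\mathcal{F}$ of \eqref{ftheta12}.) With this substitution, your remaining steps --- taking the ratio against $g=\Theta_{\vr,P}$, which is defined over $\Qb(\vr)\subset\Qbar$, invoking Theorem~\ref{shimura lift} for the field of definition, and the reciprocity and complex-conjugation bookkeeping via \eqref{eq:rec} and Remark~\ref{rmk:commutec} --- coincide with the paper's proof.
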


\begin{proof}
Consider the unary lattice $N = (\Zb, 6x^2)$ with dual lattice $\frac{1}{12}\Zb$.
From Lemma 2.1 in \cite{lischwagenscheidt}, we have
$$
\langle \theta_N(\tau), \phi_\eta \rangle = 2 \eta(\tau), \quad \phi_\eta := \phi_{1/12}  - \phi_{5/12}  - \phi_{7/12}  + \phi_{11/12}  \in \Sc_N. 
$$
  Now for any $f \in M^!_{k, P}$, we have
\begin{align*}
  \langle f(\tau), \Theta_{ P}(\tau, h) \rangle_{\Pet}
  &= \frac{1}{4}\langle f_\eta, \Theta_{P}(\tau, h)\otimes\overline{\theta_N(\tau)} \rangle_{\Pet}
  = \frac{y_0^{1-k}}{4} \Phi_{L,k-1}(f_\eta,z_0, h)
\end{align*}
where $L = \Zb[6] \oplus P^-$ is a lattice of signature $(1, 2)$,
$z_0 = z_0^+ \in \Db_{L}$ is the negative 2-plane associated with $U^- \otimes \Rb \subset L \otimes \Rb$ and
$$
f_\eta := f(\tau) \otimes  \eta(\tau)^{-1}\phi_\eta \in M^!_{k-1/2, L^-}.
$$
Theorem \ref{shimura lift} tells us that the function $\Phi_{L, k-1}(f_\eta,z,h)$ is a meromorphic modular form of weight $2k-2$ on the Shimura variety $X_{K(L)}$ associated with $V := L \otimes \Qb$ with $K(L)$ an open compact stabilizing $\hat L$ and acting trivially on $\hat L'/\hat L$.
Furthermore, it has the same field of definition as that of $f_\eta$, hence also of $f$.

We can now write 
\begin{equation}
  \label{eq:Petratio}
  \begin{split}
      \frac{  \langle f(\tau), \Theta_{ \vr, P}(\tau) \rangle_{\Pet}}{  \| \Theta_{\vr, P}(\tau) \|^2_{\Pet}}
      &=
\Psi_{\vr, f}(z_0, 1),\\
\Psi_{\vr, f}(z, h)
&:=      \frac{ \sum_{h' \in \Cl(K_\f)} \overline{\vr(\overline{h'}/h')} \Phi_{L,k-1}(f_\eta,z, hh') }{
\sum_{h' \in \Cl(K_\f)} \overline{\vr(\overline{h'}/h')} \Phi_{L,k-1}((\Theta_{\vr, P})_\eta,z, hh') }
  \end{split}
\end{equation}
with $\Psi_{\vr, f}(z, h)$ a meromorphic modular function on $X_{K(L)}$ with poles away from $Z(U)$.
Furthermore,
$$
\Psi_{\vr, f}(z, 1) = \Psi_{\vr, f}(z, h) 
$$
for all $[z, h] \in X_{K(L)}$.
If $f$ is defined over $\Qbar$, so is $f_\eta$ and $\Psi_{\vr, f}$. Then the special value $\Psi_{\vr, f}(z_0, h)$ is in $\Qbar$. 
For $\sigma \in \Gal(\Qbar/\k)$ with image $\sigma_h \in {\Gal(\k^\ab/\k)}$ for $h \in \hat \k^\times$, we can apply Remark \ref{rmk:commutec} and equation \eqref{eq:rec} to obtain
\begin{equation*}
  \begin{split}
        \lp \frac{  \langle f(\tau), \Theta_{ \vr, P}(\tau) \rangle_{\Pet}}{  \| \Theta_{\vr, P}(\tau) \|^2_{\Pet}} \rp^{    \sigma_{}}
&=
 \Psi_{\vr, f}(z_0, 1)^{    \sigma_{}}
 = \Psi_{\vr^\sigma, f^\sigma}(z_0, h^{-1}) = \Psi_{\vr^\sigma, f^\sigma}(z_0, 1)\\
&=      \frac{  \langle f^\sigma(\tau), \Theta_{ \vr^\sigma, P}(\tau) \rangle_{\Pet}}{  \| \Theta_{\vr^\sigma, P}(\tau) \|^2_{\Pet}} .
  \end{split}
\end{equation*}
This holds similarly when $\sigma$ is complex conjugation $\c \in \GQ$, hence \eqref{eq:Petratio2} holds for all $\sigma \in \GQ$.
\end{proof}

\subsection{Comparing Petersson norms}
\label{subsec:Pnorm}
Now, we will compute $\|\theta_{U_a, k}\|^2_{\mathrm{Pet}}$ for $k \ge 2$ and compare them with $U_a$ varying.
The same calculations in the case of weight one newforms with CM by a real quadratic field are contained in \cite{Li18}.

First, we identify $U_a \oplus U_{-a}$ and $V_a := (M_2(\Qb), a \det)$ by
\begin{equation}
  \label{eq:UVid}
  \begin{split}
      (1, 1) &\mapsto \smat{2}{0}{0}{0}, \quad  (1, -1) \mapsto \smat{0}{0}{0}{2},\\
  (\sqrt{D}, \sqrt{D}) &\mapsto \smat{0}{2D}{0}{0}, \quad   (\sqrt{D}, -\sqrt{D}) \mapsto \smat{0}{0}{-2}{0}.
  \end{split}
\end{equation}
Notice we have the totally isotropic splitting
\begin{equation}
  \label{eq:isosplit}
  V_a = V_a^+ + V_a^-, \quad 
  V_a^\pm := \{(x, \pm x): x \in \k\} \subset U_a \oplus U_{-a}.
\end{equation}
Also, the linear map $x \mapsto \smat{a}{}{}{1} x$ gives an isometry from $V_1$ to $V_a$.
Let $L_1 \subset V_1$ be the unimodular lattice $M_2(\Zb)$, and denote $L_a := \smat{a}{}{}{1} L_1 \subset V_a$ the unimodular lattice in $V_a$.

Let $z_0 := \Rb \smat{1}{}{}{1} + \Rb \smat{}{1}{-D}{} \in \Db_{V_a}$  be the oriented negative 2-plane.
So we can write
$$
v^{k-1} \theta_{U_a, k}(\tau, \phi_1) \overline{\theta_{U_a, k}(\tau, \phi_2)}
= \theta_{V_a}(g_\tau, z_0, \phi^{(k, k)})
$$
with $\phi^{(k, k)} = \phi \phi^{(k, k)}_\infty$ for $\phi = \phi_1 \otimes \overline{\phi_2} \in  \Sc(\hat U_a; \Cb)\otimes  \Sc(\hat U_{-a}; \Cb) \cong   \Sc(\hat V_a; \Cb)$ and $\phi^{(k, k)}_{\infty}(z_1, z_2) = z_1^k \overline{z_2}^k \in \Sc(U_a(\Rb)) \otimes \Sc(U_{-a}(\Rb)) \cong \Sc(V_a(\Rb))$.
By the Siegel-Weil formula, we have
$$
2\int_{[\SL_2]} \theta_{V_a}(g, z, \phi^{(k, k)}) dg =
E^{H_{V_a}}([z, 1], \hat\phi^k)
$$
for $k \ge 2$, where $\hat\phi^k = \hat\phi \hat\phi_\infty^k \in \Sc((W \otimes V^+_a)(\Ab))$ is the Fourier transform of $\phi^{(k, k)}$, and $E^{H_{V_a}}$ is the Eisenstein series induced from the parabolic $P^+$ stabilizing $V^+_a$. Note that $E^{H_{V_a}}$ a modular form of weight $(2k, 0)$ that is nearly holomorphic in $z_1$ and holomorphic in $z_2$ on the orthogonal Shimura variety $X_{V_a, K}$ associated with $V_a$.
Note that the canonical isomorphism $H_{V_a} \cong H_{V_1}$ induces a canonical isomorphism
\begin{equation}
  \label{eq:XVa}
  X_{V_a, K} \cong X_{V_1, K}
\end{equation}
for any $a \in \Qb_{> 0}$.
Also, $X_{V_a, K}$ is isomorphic to disjoint unions of products of modular curves. 
We now have the following result.
\begin{proposition}
  \label{prop:Pnorm-compare}
  For every even integral lattice $P \subset U_a$ and Hecke character $\vr$ of weight $k \ge 2$,  there exists $\alpha_{\vr, P} \in \Qbar$ and $\Omega_{\vr, P} \in \Rb_{> 0}$ such that
  \begin{equation}
    \label{eq:Omega-chi}
      \alpha_{\vr^\sigma, P} =       \alpha_{\vr, P}^\sigma,\qquad
    \Omega_{\vr, P} =
      \frac{\| \Theta_{\vr, P}(\tau) \|^2_\Pet }{ \alpha_{\vr, P} }
      =    \frac{\| \Theta_{\vr^\sigma, P}(\tau) \|^2_\Pet }{ \alpha^\sigma_{\vr, P} }
      =     \Omega_{\vr^\sigma, P},
   \end{equation}
 for every $\sigma \in \Gal(\Qbar/\Qb)$  and
   \begin{equation}
     \label{eq:Omega-ratio}
     \frac{\Omega_{\vr, P}}{\Omega_{\vr}} \in \Qb_{> 0},
   \end{equation}
   where we denote $\Omega_\vr := \Omega_{\vr, \Oc_\k}$ with $\Oc_\k \subset U_1$. 
\end{proposition}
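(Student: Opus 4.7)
The plan is to reduce $\|\Theta_{\vr, P}\|^2_{\Pet}$ to CM values of the nearly holomorphic Eisenstein series $E^{H_{V_a}}$ on the orthogonal Shimura variety $X_{V_a, K}$ via the Siegel--Weil identity prepared in the paragraph preceding the statement, and then to invoke Shimura's theory of CM values of nearly holomorphic modular forms to extract $\alpha_{\vr,P}$ and $\Omega_{\vr,P}$.

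First, I would unfold the class-group average \eqref{eq:thetachiP} to write
\begin{equation*}
\|\Theta_{\vr, P}\|^2_{\Pet} = \vol(K_\f)^2 \sum_{h_1, h_2 \in \Cl(K_\f)} \vr(\bar h_1/h_1)\, \overline{\vr(\bar h_2/h_2)}\, \langle \Theta_P(\cdot, h_1), \Theta_P(\cdot, h_2)\rangle_{\Pet}.
\end{equation*}
Applying the identity $v^{k-1}\theta_{U_a,k}(\tau,\phi_1)\overline{\theta_{U_a,k}(\tau,\phi_2)} = \theta_{V_a}(g_\tau, z_0, \phi^{(k,k)})$ componentwise in $\Sc_P$ together with the Siegel--Weil integration, each inner product becomes (up to an explicit rational constant) a value of $E^{H_{V_a}}$ at a CM point of the form $[z_0, h_1 h_2^{-1}]$ on $X_{V_a, K}$; note that $X_{V_a, K} \cong X_{V_1, K}$ canonically by \eqref{eq:XVa}, and is a disjoint union of products of modular curves since $V_a$ has signature $(2,2)$. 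Because $E^{H_{V_a}}$ is a nearly holomorphic modular form of weight $(2k, 0)$ defined over $\Qb$, Shimura's theory for nearly holomorphic forms \cite{shimura75} provides a canonical Chowla--Selberg type CM period $\Omega_\k \in \Rb_{>0}$ depending only on $\k$ such that every CM value $E^{H_{V_a}}([z_0, h])$ lies in $\Qbar \cdot \Omega_\k^{2k-2}$, with Galois action governed by the reciprocity law \eqref{eq:rec}.

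I then define $\Omega_{\vr, P}$ to be $\Omega_\k^{2k-2}$ multiplied by the positive rational normalization constants arising from $\vol(K_\f)^2$, the Siegel--Weil constant, the factor $a^{1-k}$ in \eqref{eq:thetaUk}, and the lattice volume of $P$. None of these factors depend on $\vr$ beyond the $\sigma$-invariant data (the weight $k$ and $|(\Oc_\k/\mf)^\times| = |(\Oc_\k/\sigma(\mf))^\times|$), so $\Omega_{\vr^\sigma, P} = \Omega_{\vr, P}$ is automatic. Algebraicity of $\alpha_{\vr, P} := \|\Theta_{\vr, P}\|^2_{\Pet}/\Omega_{\vr, P} \in \Qbar$ is immediate, and the equivariance $\alpha_{\vr^\sigma, P} = \alpha_{\vr, P}^\sigma$ follows by combining the reciprocity shift $[z_0, h] \mapsto [z_0, h_\sigma h]$ on each CM value with the substitution $\vr(\bar h/h) \mapsto \vr^\sigma(\bar h/h)^\sigma$ in the class-group sum, which together amount to a re-indexing of the double sum over $\Cl(K_\f)^2$; the complex-conjugation case is handled via Remark~\ref{rmk:commutec} and \eqref{eq:commutec}. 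For the ratio \eqref{eq:Omega-ratio}, both $\Omega_{\vr,P}$ and $\Omega_\vr = \Omega_{\vr, \Oc_\k}$ equal $\Omega_\k^{2k-2}$ up to positive rational factors (the Shimura varieties for $V_a$ and $V_1$ being canonically identified), so their ratio lies in $\Qb_{>0}$. The main obstacle will be the careful bookkeeping of these rational normalization constants---volumes $\vol(K_\f)$, the Siegel--Weil constant, the $a^{1-k}$ factor, and the lattice indices comparing $P \subset U_a$ with $\Oc_\k \subset U_1$---to verify that they conspire to produce a rational ratio independent of $\vr$.
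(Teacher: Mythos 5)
Your reduction of $\|\Theta_{\vr,P}\|^2_{\Pet}$ to a double class-group sum of CM values of the Eisenstein series $E^{H_{V_a}}$ via the Siegel--Weil formula is exactly the paper's starting point, and the use of the identification $X_{V_a,K}\cong X_{V_1,K}$ for the ratio \eqref{eq:Omega-ratio} also matches. The gap comes immediately after: you assert that Shimura's theory supplies a single canonical period $\Omega_\k^{2k-2}$ such that every CM value $E^{H_{V_a}}([z_0,h])$ lies in $\Qbar\cdot\Omega_\k^{2k-2}$ \emph{with Galois action on the algebraic parts governed by the reciprocity law} \eqref{eq:rec}. But \eqref{eq:rec} and its extension in \cite{shimura75} apply to weight-zero objects, i.e.\ to \emph{ratios} of (nearly holomorphic) modular forms, where the period cancels. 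For a single positive-weight form the reciprocity law only determines the Galois action on the normalized values up to a $1$-cocycle: if $A(h):=E([z_0,h])/\Omega_\k^{2k-2}$, then the ratio statement gives $(A(h_1)/A(h_2))^{\sigma}=A^\sigma(h_\sigma^{-1}h_1)/A^\sigma(h_\sigma^{-1}h_2)$, hence $A(h)^\sigma=c_\sigma\,A^\sigma(h_\sigma^{-1}h)$ for some $c_\sigma\in\Qbar^\times$ depending on the choice of period. Your claimed identity $\alpha_{\vr^\sigma,P}=\alpha_{\vr,P}^\sigma$ is equivalent to $c_\sigma=1$ for all $\sigma$, which is precisely the statement that the cocycle splits compatibly with a real positive period --- this is the content of the proposition, not an input to it.

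The paper closes exactly this gap: it only ever applies reciprocity to the ratios $\omega_{\vr^\sigma,P}/\omega_{\vr,P}$ and $\omega_{\vr,P}/\omega_{\vr}$ (both realized as values of genuine modular \emph{functions} $\Psi_\vr$ on $X_{V_a,K}$), derives the cocycle relation \eqref{eq:Pet-rat2}, and then invokes Hilbert's Theorem 90 for $H^1(\Gal(F/\Qb),F^\times)$ to produce $\alpha_{\vr,P}$ splitting the cocycle; a final rationality check lets one choose the $\alpha_{\vr^\sigma,P}$ consistently along the Galois orbit. Note also that in the paper's construction $\Omega_{\vr,P}$ genuinely depends on $\vr$ (it is only constant along the orbit $\{\vr^\sigma\}$), whereas your proposal forces it to be independent of $\vr$ altogether --- a strictly stronger statement that would require the full Shimura--Blasius period relations for Hecke characters, normalized over all of $\GQ$ rather than just over $\Gal(\Qbar/\k)$, none of which is established by the tools assembled in the paper. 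To repair your argument, replace the appeal to a ``canonical'' period by the cocycle-plus-Hilbert-90 descent.
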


\begin{remark}
  \label{rmk:ratio}
  When $\Theta_{\vr, P}$ is replaced by a newform, the corresponding result follows from Deligne's conjecture for symmetric square $L$-function proved by Sturm \cite{Sturm1, Sturm2}. However, this does not directly imply Prop.\ \ref{prop:Pnorm-compare} since it is not clear whether the ratio of 
  Petersson norms of two arbitrary newforms is algebraic. In fact, it is unlikely that the analogous statement in this proposition holds for an arbitrary cusp form. 
\end{remark}

\begin{proof}
  For convenience, denote
  \begin{equation}
    \label{eq:omega-Chi-P}
    \omega_{\vr, P} := \|\Theta_{\vr, P}\|^2_\Pet.
  \end{equation}
  and $\omega_\vr := \omega_{\vr, \Oc_\k}$ for $\Oc_\k \subset U_1$. 
  The following lattice in $U_a \oplus U_{-a} \cong V_a$
  \begin{equation}
    \label{eq:L1}
    L := \{(x, y) \in P' \otimes (P^-)':x - y \in P_1\}  
  \end{equation}
  is unimodular in $V_a$ and equal to $\gamma \cdot L_a$  for some  $\gamma \in \SO(V_a)$. 
By conjugating the isomorphism in \eqref{eq:UVid} with $\gamma$, we can suppose that $L = L_a$. 
  
Now for any
$\phi, \phi' \in \Sc(\hat U_a)$, we have
  \begin{align*}
&\quad    \frac{\langle \theta_{\vr}(\tau, \phi), \theta_{\vr}(\tau, \phi') \rangle_{\Pet}}{\omega_{\vr, P}}\\
&=    \frac{\sum_{h_1, h_2 \in \Cl(K_\f)}
 \vr(\overline{h_1}/h_1)
\overline{ \vr(\overline{h_2}/h_2)}
 \langle \theta_{U_a, k}(\tau, \omega_a(h_1) \phi), \theta_{U_a, k}(\tau, \omega_a(h_2)\phi') \rangle_{\Pet}}{\sum_{h_1, h_2 \in \Cl(K_\f)} 
 \vr(\overline{h_1}/h_1)
\overline{ \vr(\overline{h_2}/h_2)}
 \sum_{\mu \in \df^{-1}/\Oc_\k} \langle \theta_{U_1, k}(\tau, \omega_1(h_1) \phi_{\mu}), \theta_{U_1, k}(\tau, \omega_1(h_2)\phi_{\mu}) \rangle_{\Pet}}\\
&=    \frac{\sum_{h_1, h_2 \in \Cl(K_\f)}
   \vr(\overline{h_1}/h_1)
\overline{ \vr(\overline{h_2}/h_2)}
  E^{H_{V_a}}([z_0, (h_1, h_2)], \widehat{\phi\otimes\phi'}^k)}
  {\sum_{h_1, h_2 \in \Cl(K_\f)}
   \vr(\overline{h_1}/h_1)
\overline{ \vr(\overline{h_2}/h_2)}
  E^{H_{V_1}}([z_0, (h_1, h_2)], \hat{\phi}^k)} \\
    & = \Psi_\vr(z_0, 1; \widehat{\phi\otimes\phi'}^k), 
  \end{align*}
  where $\phi_1 := \cha(\hat L_1) \in \Sc(\hat V_a)$ and
  \begin{align*}
      \Psi_\vr(z, h; \varphi)
    &:=     \frac{\sum_{h_1, h_2 \in \Cl(K_\f)}
   \vr(\overline{h_1}/h_1)
\overline{ \vr(\overline{h_2}/h_2)}
 E^{H_{V_a}}([z, h(h_1, h_2)], \varphi^k)}
      {\sum_{h_1, h_2 \in \Cl(K_\f)}
   \vr(\overline{h_1}/h_1)
\overline{ \vr(\overline{h_2}/h_2)}
      E^{H_{V_1}}([z, h(h_1, h_2)], \hat \phi_1^k)}
  \end{align*}
  for $\varphi \in \Sc(\widehat{V_1^+ \otimes W})$.
  Standard calculations give that $\hat\phi_1 = \cha(M_2(\hat\Zb)) \in \Sc(\widehat{V_1^+ \otimes W})$.
  So $\Psi_{\vr}$ is a ratio of nearly holomorphic modular forms on $X_{V_a, K} \cong X_{V_1, K}$ defined over $\Qb(\vr, \widehat{\phi\otimes\phi'})$. Its values at CM points are defined over $\k^{\mathrm{ab}}(\vr, \widehat{\phi   \otimes \phi'})$, and reciprocity law still applies \cite{shimura75}. Arguing as in the proof of Proposition~ \ref{prop:algGal}, we have
  $$
  \Psi_\vr(z_0, 1; \varphi)^\sigma = \Psi_{\vr^\sigma}(z_0, 1; \varphi^\sigma)
  $$
  for all $\sigma \in \GQ$.
  This gives us
  \begin{equation}
    \label{eq:Pet-rat-Gal}
    \lp \frac{\omega_{\vr, P}}{\omega_{\vr}} \rp^\sigma =
     \frac{\omega_{\vr^\sigma, P}}{\omega_{\vr^\sigma}}.
   \end{equation}
for all $\sigma \in \Aut(\Cb/\Qb)$,   since   $\hat\phi_a = \cha(M_2(\hat\Zb))$ is $\Qb$-valued when $\phi_a = \cha(\hat L_a)$.
   In particular, $\frac{\omega_{\vr, P}}{\omega_{\vr}} \in \Qb(\vr)$.

   By a similar argument, we can show that $\frac{\omega_{\vr^\sigma, P}}{\omega_{\vr, P}} \in \Qb(\vr)$ and satisfies
   \begin{equation}
     \label{eq:Pet-rat2}
\lp     \frac{\omega_{\vr^\sigma, P}}{\omega_{\vr, P}}\rp^{\sigma'}
     =      \frac{\omega_{\vr^{\sigma\sigma'}, P}}{\omega_{\vr^{\sigma'}, P}}
   \end{equation}
   for all $\sigma, \sigma' \in \GQ$.
   Therefore, the function
   $$\sigma \mapsto      \frac{\omega_{\vr^{\sigma^{-1}}, P}}{\omega_{\vr, P}}$$
   defines a 1-cocycle in $H^1(\Gal(F/\Qb), F^\times)$ with $F$ the Galois closure of $\Qb(\vr)$.
   By Hilbert's Theorem 90, this cohomology group is trivial, and there exists $\alpha_{\vr, P} \in F^\times$ such that
   $$
   \frac{\omega_{\vr^{\sigma}, P}}{\omega_{\vr^{}, P}}
   =      \frac{\alpha_{\vr^{}, P}^\sigma}{\alpha_{\vr^{}, P}}
   $$
   for all $\sigma \in \Gal(F/\Qb)$.
   Simple manipulations give us
   $\frac{\alpha_{\vr^\tau, P}^\sigma}{\alpha_{\vr^\tau, P}} = \frac{\alpha_{\vr, P}^{\tau\sigma}}{\alpha^\tau_{\vr, P}}$
   for all $\sigma, \tau \in \Gal(F/\Qb)$.
   So $\frac{\alpha_{\vr^\tau, P}}{\alpha^\tau_{\vr, P}}$ is rational, and we can choose $\alpha_{\vr^\sigma, P}$
   to satisfy the first equation in \eqref{eq:Omega-chi}
      for all $\sigma \in \Gal(F/\Qb)$.
   Setting 
   $\Omega_{\vr, P} := \omega_{\vr, P}/\alpha_{\vr, P}$ then proves the second equation in \eqref{eq:Omega-chi}.
      Substituting  \eqref{eq:Omega-chi} into \eqref{eq:Pet-rat-Gal} then proves \eqref{eq:Omega-ratio}.
\end{proof}

Finally, we state a lemma comparing the constant $\Omega_\vr$ and the Petersson norm of the newform $\vartheta_\vr$. The proof uses the harmonic Maass form that we construct in the proof of Theorem \ref{thm:Gal} in Section~\ref{subsec:preimage} below, so we postpone the proof until then.
It would be interesting to give a more direct proof.
\begin{lemma}
  \label{lemma:ratio}
  For an algebraic Hecke character $\vr$, let $\vartheta_\vr$ be the newform defined in \eqref{eq:vartheta} and $\Omega_\vr \in \Rb_{> 0}$ the quantity from Proposition~\ref{prop:Pnorm-compare}.
  Then 
  $ \|\vartheta_\vr\|_\Pet^2/\Omega_\vr \in \Qbar$
  satisfies
  \begin{equation}
    \label{eq:Gal-ratio}
\lp  \|\vartheta_\vr\|_\Pet^2/\Omega_\vr\rp^\sigma
  =
  \|\vartheta_{\vr^\sigma}\|_\Pet^2/\Omega_{\vr^\sigma}
  \end{equation}
  for all $\sigma \in \GQ$.
  In particular, it is   in $F_\vr$. 
\end{lemma}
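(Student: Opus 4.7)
My plan is to prove the lemma using the explicit harmonic Maass form $\tilde h_\vr$ produced during the construction in Section~\ref{subsec:preimage} (before the final rescaling that turns it into a good form for $\vt_\vr$). Combining Propositions~\ref{normalized harmonic Maass form}, \ref{prop:algGal}, \ref{prop:Pnorm-compare} and \ref{prop:vv2sc}, this form $\tilde h_\vr \in H_{2-k}(N,\overline{\chi})$ satisfies $\xi_{2-k}(\tilde h_\vr) = \vt_\vr/\Omega_\vr$, and its holomorphic-part Fourier coefficients lie in $\Qbar$ and satisfy the Galois compatibility $(\tilde h_\vr^+)^\sigma = \tilde h_{\vr^\sigma}^+$ for every $\sigma\in\GQ$. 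The same holds for the principal part of $\tilde h_\vr$ at every cusp, not just at $\infty$.

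The key step is the Stokes' theorem identity \eqref{eq:stokes} applied to $\tilde f = \tilde h_\vr$ and $g = \vt_\vr$. Since $\langle\xi_{2-k}(\tilde h_\vr),\vt_\vr\rangle_\Pet = \|\vt_\vr\|^2_\Pet/\Omega_\vr$, this reads
\[
[\SL_2(\Zb):\Gamma(N)]^{-1}\,\|\vt_\vr\|^2_\Pet/\Omega_\vr \;=\; \sum_{c\in\Gamma(N)\backslash\mathbb{P}^1(\Qb)} \CT\bigl(\prin(\tilde h_\vr|_{2-k}\gamma_c)\cdot(\vt_\vr|_k\gamma_c)\bigr).
\]
The right-hand side is a finite sum of products of algebraic Fourier coefficients coming from principal parts of $\tilde h_\vr$ and from $\vt_\vr$ at the various cusps, hence $\|\vt_\vr\|^2_\Pet/\Omega_\vr\in\Qbar$. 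Applying $\sigma\in\GQ$ term by term sends the principal-part coefficients of $\tilde h_\vr$ to those of $\tilde h_{\vr^\sigma}$ and, via \eqref{eq:chi-chic}, the Fourier coefficients of $\vt_\vr$ to those of $\vt_{\vr^\sigma}$; consequently $(\|\vt_\vr\|^2_\Pet/\Omega_\vr)^\sigma = \|\vt_{\vr^\sigma}\|^2_\Pet/\Omega_{\vr^\sigma}$, which is \eqref{eq:Gal-ratio}.

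To conclude that $\|\vt_\vr\|^2_\Pet/\Omega_\vr \in F_\vr$, note that for $\sigma\in\Gal(\Qbar/F_\vr)$ the coefficients of $\vt_{\vr^\sigma}$ agree with those of $\vt_\vr$ by definition of $F_\vr$, so $\|\vt_{\vr^\sigma}\|^2_\Pet = \|\vt_\vr\|^2_\Pet$; furthermore $\Omega_{\vr^\sigma}=\Omega_\vr$ by \eqref{eq:Omega-chi}. Thus both sides of \eqref{eq:Gal-ratio} coincide on $\Gal(\Qbar/F_\vr)$, so $\|\vt_\vr\|^2_\Pet/\Omega_\vr$ is invariant under this subgroup and lies in $F_\vr$.

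The main technical obstacle is securing the algebraicity and Galois compatibility of the principal parts of $\tilde h_\vr$ at the cusps other than $\infty$, since the ``good form'' conditions only constrain the principal part at the cusp $\infty$. This is resolved by working at the vector-valued level, as in Theorem~\ref{thm:hmfchi}: a single vector-valued principal part at $\infty$ for $\SL_2(\Zb)$ encodes, via the scalarization of Proposition~\ref{prop:vv2sc} and the action of the Weil representation \eqref{eq:gammaa}, the scalar principal parts at all $\Gamma_0(N)$-cusps, and the Galois-compatibility established for the vector-valued form at $\infty$ propagates to every scalar cusp.
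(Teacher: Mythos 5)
Your proposal is correct and follows essentially the same route as the paper: it uses the pre-normalized harmonic Maass form $\tilde g_\vr$ from \eqref{eq:tvar}, evaluates $\|\vartheta_\vr\|^2_\Pet/\Omega_\vr$ via the Stokes identity \eqref{eq:stokes}, and handles the expansions at all cusps by passing to the vector-valued level, where the rationality of $\tr^N_1(\phi_\mu\otimes\phi_{\mu'})$ and the Galois compatibility of Theorem~\ref{thm:hmfchi} give \eqref{eq:Gal-ratio}; the descent to $F_\vr$ via invariance under $\Gal(\Qbar/F_\vr)$ and \eqref{eq:Omega-chi} also matches the paper.
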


\subsection{Preimages of CM newforms}
\label{subsec:preimage}
Using a rational basis of $M^!_{k, P}$, applying Proposition~\ref{normalized harmonic Maass form} and Proposition~\ref{prop:Pnorm-compare}, we can prove the following result.

\begin{theorem}
  \label{thm:hmfchi}
  Let $P \subset U$ be a lattice, $\vr$ an algebraic Hecke character, and  $\Omega_{\vr} \in \Rb_{> 0}$ be as in Proposition \ref{prop:Pnorm-compare}.
Then there exists
 a vector-valued harmonic Maass form $\tilde\Theta_{\vr, P} = (\tilde \theta_{\vr, P + \mu})_{\mu \in P'/P}$ such that
 $\xi_{2-k} \tilde\Theta_{\vr, P} =  \Theta_{\vr, P}/ \Omega_\vr$ and the holomorphic part $\tilde\Theta^+_{\vr, P}$ has Fourier coefficients in $\Qb(\vr)$ satisfying
 \begin{equation}
   \label{eq:Galsig}
   \lp \tilde\Theta^+_{\vr, P} \rp^\sigma  =    \tilde\Theta^+_{\vr^\sigma, P} 
 \end{equation}
 for all $\sigma \in \Gal(\Qbar/\Qb)$. 
\end{theorem}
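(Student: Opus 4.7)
The plan is to apply Proposition~\ref{normalized harmonic Maass form} directly to the cusp form $g := \Theta_{\vr, P}/\Omega_\vr \in S_{k, P}$. This immediately produces a harmonic Maass form $\tilde\Theta_{\vr, P} \in H_{2-k, P^-}$ with $\xi_{2-k}\tilde\Theta_{\vr, P} = \Theta_{\vr, P}/\Omega_\vr$ and holomorphic part
\[
\tilde\Theta^+_{\vr, P}(\tau) = \sum_{\mu \in P'/P}\sum_{m \in \Q} \frac{\langle f_{m,\mu}, \Theta_{\vr, P}\rangle_\Pet}{\Omega_\vr}\, q^m \phi_\mu,
\]
where $\{f_{m,\mu}\}$ is the $\Qb$-rational basis of $M^!_{k, P}$ from Lemma~\ref{basis modular forms}. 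All that remains is to verify that each Fourier coefficient lies in $\Qb(\vr)$ and that the assignment $\vr \mapsto \tilde\Theta^+_{\vr, P}$ is Galois-equivariant.

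The key step will be to factor each coefficient as
\[
\frac{\langle f_{m,\mu}, \Theta_{\vr, P}\rangle_\Pet}{\Omega_\vr}
= \frac{\langle f_{m,\mu}, \Theta_{\vr, P}\rangle_\Pet}{\|\Theta_{\vr, P}\|^2_\Pet}\,\cdot\,\alpha_{\vr, P}\,\cdot\,\frac{\Omega_{\vr, P}}{\Omega_\vr},
\]
with $\alpha_{\vr, P} := \|\Theta_{\vr, P}\|^2_\Pet/\Omega_{\vr, P}$ and $\Omega_{\vr, P}/\Omega_\vr \in \Qb_{>0}$ as in Proposition~\ref{prop:Pnorm-compare}. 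I then analyze each factor separately under a Galois element $\sigma \in \GQ$. The first factor is algebraic, and the equivariance $\text{(first factor for }\vr\text{)}^\sigma = \text{(first factor for }\vr^\sigma\text{)}$ follows from Proposition~\ref{prop:algGal} together with $f_{m,\mu}^\sigma = f_{m,\mu}$ (the basis is $\Qb$-rational). The second factor transforms as $\alpha_{\vr, P}^\sigma = \alpha_{\vr^\sigma, P}$ by the first identity of \eqref{eq:Omega-chi}. The third factor is a positive rational number by \eqref{eq:Omega-ratio}, hence Galois-fixed, and moreover coincides for $\vr$ and $\vr^\sigma$ because $\Omega_{\vr, P} = \Omega_{\vr^\sigma, P}$ and $\Omega_\vr = \Omega_{\vr^\sigma}$ by the second identity of \eqref{eq:Omega-chi}. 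Multiplying these three equivariances term-by-term yields \eqref{eq:Galsig}.

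The claim that every Fourier coefficient lies in $\Qb(\vr)$ then follows formally from \eqref{eq:Galsig}: if $\sigma \in \GQ$ satisfies $\sigma|_{\Qb(\vr)} = \mathrm{id}$, then since $\k \subset \Qb(\vr)$ the element $\sigma$ fixes fractional ideals of $\k$ and the values of $\vr$, hence $\vr^\sigma = \vr$ by definition \eqref{eq:chisigc}, and \eqref{eq:Galsig} gives $(\tilde\Theta^+_{\vr, P})^\sigma = \tilde\Theta^+_{\vr, P}$. In this approach essentially all the substantive algebraic content has been packaged into Propositions~\ref{prop:algGal} and~\ref{prop:Pnorm-compare}; the main obstacle is recognizing the right three-factor decomposition and confirming that the rational normalization factor $\Omega_{\vr, P}/\Omega_\vr$ is insensitive to replacing $\vr$ by $\vr^\sigma$. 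Once this is noted, the rest is bookkeeping to combine the Galois equivariances already established.
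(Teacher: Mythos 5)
Your proposal is correct and follows essentially the same route as the paper: the paper's proof also applies Proposition~\ref{normalized harmonic Maass form} to $\Theta_{\vr,P}/\Omega_\vr$ and writes each coefficient as exactly the three-factor product $\frac{\Omega_{\vr,P}}{\Omega_\vr}\cdot\frac{\|\Theta_{\vr,P}\|^2_\Pet}{\Omega_{\vr,P}}\cdot\frac{\langle f_{m,\mu},\Theta_{\vr,P}\rangle_\Pet}{\|\Theta_{\vr,P}\|^2_\Pet}$, invoking Proposition~\ref{prop:Pnorm-compare} and \eqref{eq:Petratio2} for the Galois equivariance of each factor. Your write-up simply makes explicit the bookkeeping that the paper leaves to the reader.
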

\begin{proof}
  Let $f_{m, \mu} \in M^!_{k, P}$ be the basis in Lemma~\ref{basis modular forms}. 
  Then
  \begin{equation}
    \label{eq:tTheta}
    \tilde{\Theta}^+_{\vr, P}(\tau) :=
\frac{\Omega_{\vr, P}}{\Omega_\vr}
    \frac{\|\Theta_{\vr, P}\|_\Pet^2}{\Omega_{\vr, P}}
    \sum_{\mu \in L'/L}\sum_{m \in \Q}
  \frac{\langle f_{m,\mu}, \Theta_{\vr, P}\rangle_{\Pet}}{\|\Theta_{\vr, P}\|_\Pet^2}
  q^m\phi_\mu
  \end{equation}
  satisfies the conditions by Proposition~\ref{prop:Pnorm-compare} and equation \eqref{eq:Petratio2}.
\end{proof}

 Let $\tilde\Theta^+_{\vr, P_j} = (\tilde \theta^+_{\vr, P_j + \mu})_{\mu \in P'_j/P_j}$ be the mock modular form in the above theorem.
  Then by \eqref{eq:vv2sc2} and Theorem \ref{thm:hmfchi}, the harmonic Maass form 
\begin{equation}
  \label{eq:tvar}
  \tilde g_\vr(\tau) :=
  \frac{1}{ |\Oc_\k^\times|}
  \sum_{1 \le i  \le \ell} \overline{\vr(t_{i })}^{-1}
  \sum_{\mu \in (\Oc_\k/\mf)^\times}\overline{\eta(\mu)}
  \tilde\theta_{\vr, P_{i } + \mu}(\tau)  
\end{equation}
satisfies $\xi_{2-k}( \tilde g_\vr) = \vartheta_\vr(\tau)/\Omega_\vr$ and
\begin{equation}
  \label{eq:Galsc}
(\tilde g_{\vr}^+)^\sigma =
\tilde g_{\vr^\sigma}^+,
\end{equation}
which implies that its holomorphic part Fourier coefficients at the cusp infinity are in $\Qb(\vr)$.
By \cite[Proposition~4.8]{scheithauerweil}, we know that $\tilde g_\vr$ has level $\Gamma_0(N)$ and nebentypus character $\chi_\k \overline\eta$.
We first use this scalar-valued modular form to prove Lemma \ref{lemma:ratio}.

\begin{proof}[Proof of Lemma \ref{lemma:ratio}]
Let $\tilde{g}_\vr$ be as in \eqref{eq:tvar}. 
Then this ratio can be expressed as
\begin{align*}
  \|\vartheta_\vr\|_\Pet^2/\Omega_\vr 
  &= \langle \vartheta_\vr, \vartheta_\vr \rangle/\Omega_\vr
  =
\text{Constant term of }
\lp \tr^N_1   (\tilde g_\vr \cdot \vartheta_\vr)\rp^+.        
\end{align*}
We will prove more generally that
\begin{equation}
  \label{eq:Gal-act}
\lp \lp \tr^N_1  \vartheta_\vr(\tau) \tilde g_\vr( \tau)\rp^+\rp^\sigma
=  \lp \tr^N_1  \vartheta_{\vr^\sigma}(\tau) \tilde g_{\vr^\sigma}( \tau)\rp^+
\end{equation}
for all $\sigma \in \GQ$. 
Using the definition of $\tilde g_\vr$ and \eqref{eq:vv2sc2}, we have
\begin{align*}
  \tr^N_1  \vartheta_\vr(\tau) \tilde g_\vr( \tau)
  &= 
     \frac{1}{|\Oc_\k^\times|^2}
    \sum_{\substack{ 1 \le i, i'  \le \ell\\ \mu, \mu' \in (\Oc_\k/\mf)^\times}}
\frac{\overline{\eta(\mu')}{\eta^{}(\mu)}}{  \overline{\vr(t_{i' })}
  \vr^{}(t_{i })}
  \tr^N_1
\lp \theta_{\vr, P_{i } + \mu}(\tau)  \tilde\theta_{\vr^{}, P_{i' } + \mu'}(\tau)\rp.
\end{align*}
Since $\theta_{\vr^\sigma, P_i + \mu}$ and $\tilde\theta_{\vr^{\sigma'}, P_{i'} + \mu'}$ are components of vector-valued modular forms $\Theta_{\vr^\sigma, P_i}(\tau) \in S_{k, P_i}$ and  $\tilde\Theta_{\vr^{\sigma'}, P_{i'}}(\tau) \in H_{2-k, P^-_{i'}}$ respectively, we have
\begin{align*}
    \tr^N_1
\lp \theta_{\vr, P_{i } + \mu}(\tau)  \tilde\theta_{\vr^{}, P_{i' } + \mu'}(\tau)\rp
  &=
    \sum_{\gamma \in \Gamma_0(N) \backslash \SL_2(\Zb)}
    \langle \Theta_{\vr, P_{i }}(\tau)  \otimes \tilde\Theta_{\vr^{}, P_{i' } }(\tau), \phi_{\mu} \otimes \phi_{\mu'}\rangle \mid_2 \gamma    \\
  &=
    \left\langle \Theta_{\vr, P_{i }}(\tau)  \otimes \tilde\Theta_{\vr^{}, P_{i' } }(\tau),
\tr^N_1 (\phi_\mu \otimes \phi_{\mu'})
    \right\rangle,
\end{align*}
where $\tr^N_1(\phi_{\mu} \otimes \phi_{\mu'})$ is defined in \eqref{eq:tr} and has value in $\Qb$ by the discussion there.

Given $\sigma \in \GQ$, we can apply Remark \ref{rmk:commutec} and Theorem \ref{thm:hmfchi} to obtain
  \begin{align*}
&\lp 
\frac{\overline{\eta(\mu')}{\eta^{}(\mu)}}{  \overline{\vr(t_{i' })}
  \vr^{}(t_{i })}
  \tr^N_1
    \lp \theta_{\vr, P_{i } + \mu}(\tau)  \tilde\theta_{\vr^{}, P_{i' } + \mu'}(\tau)\rp^+    \rp^{\sigma}\\
    &=
\frac{\overline{\eta^{\sigma}(\mu')}{\eta^{\sigma}(\mu)}}{  \overline{\vr^{\sigma}(t_{i' })}
  \vr^{\sigma}(t_{i })}     
    \left\langle \Theta_{\vr, P_{i }}(\tau)  \otimes \tilde\Theta^+_{\vr^{}, P_{i' } }(\tau),
\tr^N_1(\phi_{\mu} \otimes \phi_{\mu'})\right\rangle^{\sigma}\\
    &=
\frac{\overline{\eta^{\sigma}(\mu')}{\eta^{\sigma}(\mu)}}{  \overline{\vr^{\sigma}(t_{i' })}
  \vr^{\sigma}(t_{i })}      
\left\langle \Theta_{\vr^{\sigma}, P_{i }}(\tau)  \otimes \tilde\Theta^+_{\vr^{\sigma}, P_{i' } }(\tau), 
\tr^N_1(\phi_{\mu} \otimes \phi_{\mu'}) \right\rangle\\
&=     
\frac{\overline{\eta^\sigma(\mu')}{\eta^{\sigma'}(\mu)}}{  \overline{\vr^\sigma(t_{i' })}
  \vr^{\sigma'}(t_{i })}
  \tr^N_1
    \lp \theta_{\vr^\sigma, P_{i } + \mu}(\tau)  \tilde\theta_{\vr^{\sigma'}, P_{i' } + \mu'}(\tau)\rp^+
  \end{align*}
  for any $i, i', \mu, \mu'$.
  Therefore, $\lp \tr^N_1  \vartheta_\vr(\tau) \tilde g_1( \tau)\rp^+$ satisfies \eqref{eq:Gal-act}, and equation \eqref{eq:Gal-ratio} holds.
  The last claim then follows from the second equality in \eqref{eq:Omega-chi}. 
\end{proof}


Similarly, we have the following result concerning the Fourier expansion of $(\tilde g_\vr)^\sigma$ at various cusps.

\begin{proposition}
  \label{prop:Galexp}
  Let $\tilde g_\vr$ be the harmonic Maass form of level $N$ defined in \eqref{eq:tvar}.
  For any $\gamma \in \SL_2(\Zb)$ and $\sigma \in \GQ$, we have
  \begin{equation}
    \label{eq:Galexp}
  (\tilde g_\vr \mid_{2-k} \gamma)^{\sigma}
=    \tilde g_{\vr^\sigma} \mid_{2-k} \gamma^a,
\end{equation}
where $\sigma \mid_{\Gal(\Qb(\zeta_N)/\Qb)} = \sigma_a$ for $a \in \Zb/N\Zb$ and $\gamma^a$ is defined in \eqref{eq:gammaa}
\end{proposition}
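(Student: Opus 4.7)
The plan is to lift the scalar identity to the vector-valued building blocks of $\tilde g_\vr$ supplied by Theorem~\ref{thm:hmfchi}. Since the notation $(\cdot)^\sigma$ is defined on Laurent series via the paper's convention in Section~\ref{sec:prelim}, the claim~\eqref{eq:Galexp} is an identity on the Fourier expansion at $\infty$ of both sides; accordingly it suffices to verify that the holomorphic part Fourier coefficients match after Galois conjugation (the non-holomorphic parts then follow because both sides have $\xi$-image $\vartheta_{\vr^\sigma}\mid_k\gamma^a/\Omega_{\vr^\sigma}$ by~\eqref{eq:chi-chic} and a trivial scalar-valued version of the McGraw identity).

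From~\eqref{eq:tvar}, each scalar component $\tilde\theta_{\vr, P_i + \mu}$ appearing in $\tilde g_\vr$ is a component of $\tilde\Theta_{\vr, P_i} \in H_{2-k, P_i^-}$, which transforms under $\gamma \in \SL_2(\Zb)$ via the Weil representation:
\[
\tilde\theta_{\vr, P_i+\mu}\mid_{2-k}\gamma \;=\; \sum_{\nu \in P_i'/P_i}\rho_{P_i^-}(\gamma)_{\mu\nu}\,\tilde\theta_{\vr, P_i+\nu}.
\]
Substituting into~\eqref{eq:tvar} writes $\tilde g_\vr \mid_{2-k}\gamma$ as an explicit $\Qb(\vr)(\zeta_N)$-linear combination of the scalar components $\tilde\theta_{\vr, P_i+\nu}$, with all $\gamma$-dependence concentrated in the matrix entries $\rho_{P_i^-}(\gamma)_{\mu\nu}$.

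Applying $\sigma \in \GQ$ term by term to the holomorphic part, three ingredients enter:
(i) Theorem~\ref{thm:hmfchi} gives $(\tilde\theta_{\vr, P_i+\nu}^+)^\sigma = \tilde\theta_{\vr^\sigma, P_i+\nu}^+$;
(ii) because each $P_i$ has level $N$, the matrix entries $\rho_{P_i^-}(\gamma)_{\mu\nu}$ lie in $\Qb(\zeta_N)$, and McGraw's identity~\eqref{eq:gammaa} turns their $\sigma_a$-conjugates into $\rho_{P_i^-}(\gamma^a)_{\mu\nu}$, where $\sigma\vert_{\Gal(\Qb(\zeta_N)/\Qb)} = \sigma_a$;
(iii) by the Kroneckerian property of $\Qb(\vr)$ (Lemma~\ref{lemma:field}) combined with Remark~\ref{rmk:commutec}, the Galois action commutes past the complex conjugations in $\overline{\vr(t_i)}^{-1}$ and $\overline{\eta(\mu)}$, sending them to $\overline{\vr^\sigma(t_i)}^{-1}$ and $\overline{\eta^\sigma(\mu)}$. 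Reassembling the triple sum via formula~\eqref{eq:tvar} applied to $\vr^\sigma$ and $\gamma^a$ then produces $\tilde g_{\vr^\sigma}\mid_{2-k}\gamma^a$, as required.

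The main technical obstacle is the bookkeeping when $\sigma$ acts non-trivially on $\k$: in that case the modulus $\mf$ changes to $\sigma(\mf)$ and the idele representatives $\{t_i\}$, $\{h_j\}$ chosen in~\eqref{eq:tihj} for $\vr$ need not be the corresponding representatives for $\vr^\sigma$. One must verify that after reindexing the sum over $i$, $\mu$, $\nu$ according to the Galois action on ideles, the result genuinely matches a choice of representatives for $\tilde g_{\vr^\sigma}\mid_{2-k}\gamma^a$ built via~\eqref{eq:tvar}; this is closely parallel to, and should follow from, the compatibility~\eqref{eq:Galsc} already established in the proof of Lemma~\ref{lemma:ratio}, since the invariance of both~\eqref{eq:tvar} and~\eqref{eq:vv2sc2} under a change of representatives reduces the check to the pointwise Galois identities (i)--(iii).
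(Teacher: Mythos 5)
Your proof is correct and follows essentially the same route as the paper's: expand $\tilde g_\vr \mid_{2-k}\gamma$ via \eqref{eq:tvar} and the Weil representation, then conjugate term by term using Theorem~\ref{thm:hmfchi}, McGraw's identity \eqref{eq:gammaa}, and Remark~\ref{rmk:commutec}. The paper's own proof is exactly this three-ingredient computation, written more tersely (it does not separately discuss the holomorphic/non-holomorphic split or the choice of idele representatives, which you rightly note reduce to the same pointwise identities).
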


\begin{remark}
  \label{rmk:shim}
  When $\tilde g_\vr$ is replaced by any modular function $f$ of level $\Gamma(N)$ defined over $\Qbar$, equation \eqref{eq:Galexp} holds with $\tilde g_{\vr^\sigma}$ replaced by $f^\sigma$ \cite[Theorem 6.23]{shimauto}.
Proposition \ref{prop:Galexp} can be considered as a generalization of this classical result to the setting of harmonic Maass forms.
\end{remark}
\begin{proof}
We can apply Remark \ref{rmk:commutec}, equation \eqref{eq:gammaa} and Theorem \ref{thm:hmfchi} to obtain
  \begin{align*}
  (\tilde g_\vr \mid_{2-k} \gamma)^{\sigma}
    &=
      \frac{1}{ |\Oc_\k^\times|}
    \sum_{\substack{1 \le i  \le \ell\\ \mu \in (\Oc_\k/\mf)^\times}}
\lp  \overline{\vr(t_{i })}^{-1}
                   \overline{\eta(\mu)}
  \tilde\theta_{\vr, P_{i } + \mu} \mid_{2-k} \gamma) \rp^{\sigma}\\
  &=
      \frac{1}{ |\Oc_\k^\times|}
    \sum_{    1 \le i  \le \ell}
\overline{\vr^{\sigma }(t_{i })}^{-1}
  \left\langle  
 \rho_{P_i}(\gamma)^{\sigma_a} \tilde\Theta_{\vr^{\sigma}, P_{i }},  \sum_{\mu \in (\Oc_\k/\mf)^\times}                    \overline{\eta^{\sigma}(\mu)} \phi_\mu
  \right\rangle\\
  &=  \tilde g_2^{\sigma_a} \mid_{2-k} \gamma^a.
  \end{align*}
  This finishes the proof.
\end{proof}
Now, we are ready to produce a good harmonic Maass form for $\vartheta_\vr$ satisfying Theorem \ref{thm:Gal}.
\begin{proof}[Proof of Theorem~\ref{thm:Gal} and Corollary \ref{cor:main}]
  To make $\tilde{g}_\vr$ in \eqref{eq:tvar} a good form, we first move all its poles to the cusp $\infty$ in the following way. 
Since the orthogonal complement of $\vartheta_\vr$ in $S_k(N, \chi)$ has a basis defined over $\Qbar$, 
we can apply Serre duality to find $f_\vr \in M^!_{2-k}(N, \overline\chi)$ such that $\prin(f_\vr) \in \Qbar[q^{-1}]$ and $\tilde g_\vr^+ - f_\vr$ is holomorphic away from the cusp $\infty$ (see the discussion after Proposition~\ref{normalized harmonic Maass form}). 
Now for any $\sigma \in \GQ$, we define
$$
f_{\vr^\sigma} := (f_\vr)^\sigma. 
$$
Suppose $\sigma \mid_{\Gal(\Qb(\zeta_N)/\Qb)} = \sigma_a$ for $a \in \Zb/N\Zb$.
By Proposition \ref{prop:Galexp} and Remark \ref{rmk:shim}, we have
\begin{align*}
  (\tilde g_{\vr^\sigma} - f_{\vr^\sigma}) \mid_{2-k} \gamma
  &=   (\tilde g_\vr^{\sigma} - f_\vr^{\sigma}) \mid_{2-k} \gamma
    =  ((\tilde g_\vr - f_\vr) \mid_{2-k} \gamma^{a^{-1}})^{\sigma}
\end{align*}
 for $\gamma \in \SL_2(\Zb)$, 
 which implies that $\tilde g_{\vr^\sigma}^+ - f_{\vr^\sigma}$ is also holomorphic away from $\infty$.
 By applying Proposition \ref{prop:Pnorm-compare} and Lemma \ref{lemma:ratio}, we see that the harmonic Maass form
 $$
 \tilde\vartheta_\vr :=
 \frac{1}{|\Gal(\Qb(\vr)/F_\vr)|}
 \frac{\Omega_\vr}{   \|\vartheta_{\vr}\|_\Pet^2}
 \sum_{\sigma \in \Gal(\Qb(\vr)/F_\vr)} \tilde g_{\vr^\sigma} - f_{\vr^\sigma}
 $$
 is good for $\vartheta_\vr$ and satisfies the theorem.
 Furthermore, all its Fourier coefficients are in $F_\vr$, which proves the corollary.
\end{proof}

\section{A numerical example}
\label{sec:eta8}
In this section we illustrate our construction of $\xi$-preimages of CM newforms in a numerical example. We consider the unique normalized newform 
\[
g = \eta(3z)^8 = q - 8 q^4 + 20 q^7 - 70 q^{13} + 64 q^{16} + 56 q^{19}+ \dots\in S_4(9)
\]
of weight $4$ for $\Gamma_0(9)$. We have seen in Example~\ref{ex:eta8} that it has CM by $\k = \Q(\sqrt{-3}) = \Q(\zeta)$, where $\zeta = \zeta_6 = e^{2\pi i /6}$. Moreover, $\eta(z)^8$ appears as a component of the vector-valued binary theta series $\Theta_{P,4}$ of weight $k=4$ for the lattice $P = (\O_\k,\Nm)$. We will construct a $\xi$-preimage for $\Theta_{P,4}$, which yields a $\xi$-preimage for $g$ as well.


Since $g$ has rational Fourier coefficients, our construction will yield a harmonic Maass form $\tilde{g} \in H_{-2}(9)$ with $\xi_{-2}\tilde{g} = g/\|g\|_{\Pet}^{2}$ and \emph{rational} Fourier coefficients in the holomorphic part. Such a harmonic Maass form was also constructed by Bruinier, Ono, and Rhoades in \cite{bruinieronorhoades} using a different method. At the end of this section we will compare our construction to the harmonic Maass form from \cite{bruinieronorhoades} in order to validate our results.

%
As $\Qb(\vr) = \Qb$, we can choose $\Omega_\vr = \|\Theta_{P, 4}\|_{\Pet}^2$, which is given in \eqref{eq:P4norm}.
By Proposition~\ref{normalized harmonic Maass form}, the generating series
\begin{align}\label{thetaP xi preimage}
\tilde{\Theta}_{P,4}^+(z) = \sum_{\mu \in P'/P}\sum_{m \in \Q}\frac{\langle f_{m,\mu},\Theta_{P,4}\rangle_{\Pet}}{\langle \Theta_{P,4},\Theta_{P,4}\rangle_{\Pet}}q^m \phi_\mu
\end{align}
is the holomorphic part of a harmonic Maass form $\tilde{\Theta}_{P,4} \in H_{-2,P^-}$ with $\xi_{-2}\tilde{\Theta}_{P,4} = \Theta_{P,4}/\|\Theta_{P,4}\|_{\Pet}^{2}$. Here the forms $f_{m,\mu}$ are the generating set of $M_{4,P}^!$ from Lemma~\ref{basis modular forms}. In our case, the space $M_{4,P}$ of holomorphic modular forms is one-dimensional, and spanned by $\Theta_{P,4}(z)$. In particular, the forms $f_{m,\mu}$ are given by $f_{m,0} = 0$ and $f_{m,\pm 1} = \pm f_m$ where
\begin{align*}
f_m(\tau) = \frac{1}{2}q^{-m}(\phi_{1}-\phi_{-1})+O(q^{4/3}), \qquad (m \geq -1/3, \ m \equiv 2/3 \pmod 1).
\end{align*}
Explicitly, for $m = -1/3$ we have 
\[
f_{-1/3}(\tau) = \frac{1}{2}\eta(\tau)^8(\phi_{1}+\phi_{-1})
\]
and for $m \geq 2/3$ with $m \equiv 2/3 \pmod 1$ we can write
\[
f_n(\tau) = \frac{1}{2}P_{n+1/3}(j(\tau))\cdot\eta(\tau)^8 (\phi_1-\phi_{-1})
\]
with certain polynomials $P_{n+1/3}(X)$, the first few of which are given by
\begin{align*}
  P_{0}(X) &= 1, \quad 
  P_{1}(X) = X -736, \quad
  P_{2}(X) = X^2 -1480X+153860.
\end{align*}

\subsection{Petersson inner products as special values of theta lifts} Now we evaluate the inner product $\langle f_{m,\mu},\Theta_{P,4}\rangle_{\Pet}$ appearing in \eqref{thetaP xi preimage}. By symmetry, it suffices to do this for $\mu = 1$, that is, to compute $\langle f_{m},\Theta_{P,4}\rangle_{\Pet}$ for $m \geq -1/3$ with $m \equiv 2/3 \pmod 1$. In the proof of Proposition~\ref{prop:algGal} we evaluated these inner products by multiplying $f_m$ with $\eta^{-1}(\tau)\phi_\eta$ and thereby viewing $\langle f_{m},\Theta_{P,4}\rangle_{\Pet}$ as a special value of the Borcherds-Shimura theta lift of $f_m \otimes \eta^{-1}\phi_\eta$ on the signature $(1,2)$ lattice $\Z[6] \oplus P^-$, where $\Z[6] = (\Z,6x^2)$. Unfortunately, in our example the lattice $\Z[6] \oplus P^-$ is anisotropic, so we cannot directly work with the Fourier expansion of the Borcherds-Shimura lift. Hence it will be more convenient to choose a different function than $\eta^{-1}(\tau)\phi_\eta$, which yields a theta lift on an \emph{isotropic} lattice of signature $(1,2)$. The functions and lattices chosen below are special cases of a more general construction given in Section~7.1 \cite{bruinieryang} and Sections 2.7 and 4.1 in \cite{ehlenduke}, which also work for other positive definite binary lattices $P$.

Consider the (scalar-valued) weakly holomorphic modular form
\begin{align*}
\mathcal{F}(\tau)&=\frac{\eta(\tau)^{10}}{\eta(2\tau)^5\eta(4\tau)^6}+20\frac{\eta(\tau)^2 \eta(4\tau)^2}{\eta(2\tau)^5} \\
&\quad= q^{-1} + 10 - 64q^3 + 108q^4 - 513q^7 + 808q^8 - 2752q^{11} + 4016q^{12} + \dots
\end{align*}
of weight $-1/2$ for $\Gamma_0(4)$ in the Kohnen plus space. It is the unique such form with Fourier expansion $q^{-1} + O(1)$. By splitting it into the two parts $\mathcal{F}_0(\tau)$ and $\mathcal{F}_1(\tau)$ consisting of indices $n$ with $n \equiv 0 \pmod 4$ and $n \equiv -1 \pmod 4$, respectively, and then replacing $\tau$ by $\tau/4$, we obtain a vector-valued modular form $\mathcal{F}_0(\tau/4)\phi_0 + \mathcal{F}_1(\tau/4)\phi_1$ of weight $-1/2$ for the Weil representation of $\Z[1]$. By a slight abuse of notation, we denote this vector-valued form by $\mathcal{F}(\tau)$ as well. Let $\theta(\tau) = \sum_{r(2)}\sum_{n \equiv r (2)}q^{n^2/4}\phi_r$ denote the unary theta function of weight $1/2$ for the dual Weil representation of $\Z[1]$. Then we have
\begin{align}\label{ftheta12}
\langle \mathcal{F}(\tau), \theta(\tau) \rangle = 12,
\end{align}
which easily follows from the fact that the left-hand side is a (scalar-valued) holomorphic modular form of weight $0$ for $\SL_2(\Z)$, hence constant (this result is a special case of Lemma~4.7 in \cite{ehlenduke}).

Following Section~7.1 in \cite{bruinieryang} we consider the (isotropic) lattice
\[
L = \left\{\begin{pmatrix}b & -a/3 \\ c & -b\end{pmatrix}: a,b,c \in \Z\right\}
\]
with the quadratic form $Q(X) = 3\det(X)$. Its discriminant group is isomorphic to $\Z/6\Z$, and $\Gamma_0(3)$ acts on it by conjugation. The vector
\[
\lambda_0 = \begin{pmatrix}1/2 & -1/3 \\ 1 & -1/2\end{pmatrix}
\]
lies in $L'$ and has norm $Q(\lambda_0) = 1/4 > 0$, hence $U = \lambda^\perp$ defines a negative definite plane, that is, an element of the Grassmannian $\D$. Under the identification $\H \cong \D$ given in \eqref{identification DH} it corresponds to the point
\[
z_U = \frac{3+i\sqrt{3}}{6} \in \H.
\]
We consider the positive and negative definite sublattices
\begin{align*}
\mathcal{P} &= L \cap U^\perp = 2\lambda_0 \Z, \qquad
\mathcal{N} = L \cap U = \left\{\begin{pmatrix}b & (a-b)/3 \\ a+2b & -b\end{pmatrix}:a,b \in \Z\right\}, 
\end{align*}
of $L$. We see that $\mathcal{P}$ is isometric to $\Z[1] = (\Z,x^2)$ and $\mathcal{N}$ is isometric to $(\Z^2,-a^2-ab-b^2) = P^-$. Moreover, $L$ splits as an orthogonal direct sum 
\[
L = \mathcal{P} \oplus \mathcal{N} \cong \Z[1] \oplus P^-.
\]
This is the desired embedding of $P^-$ into an isotropic lattice of signature $(1,2)$. 

A short computation shows that
\[
y_U^{-2}\left(\lambda,\begin{pmatrix}-\overline{z}_U & \overline{z}_U^2 \\ -1 & \overline{z}_U \end{pmatrix}\right) = 12(a \zeta + b)
\]
for $\lambda = 2n\lambda_0 + \left(\begin{smallmatrix}b & (a-b)/3 \\ a+2b & -b\end{smallmatrix}\right) \in L = \mathcal{P} \oplus \mathcal{N}$, which implies the splitting of the theta kernel (defined in \eqref{eq:thetaLk})
\begin{align}\label{splitting theta}
\frac{1}{12^{3}}\Theta_{L,3}(\tau,z_U) = \theta(\tau) \otimes v^4\overline{\Theta_{P,4}(\tau)}.
\end{align}

We can now evaluate the inner products $\langle f_{m},\Theta_{P,4}\rangle$ using \eqref{ftheta12} and \eqref{splitting theta},
\begin{align*}
\langle f_{m},\Theta_{P,4}\rangle_{\Pet} &= \int_{\SL_2(\Z)\backslash \H}^{\reg}\langle f_m(\tau),\overline{\Theta_{P,4}(\tau)}\rangle v^{4}d\mu(\tau) \\
&= \frac{1}{12}\int_{\SL_2(\Z)\backslash \H}^{\reg}\langle f_m(\tau)\otimes \mathcal{F}(\tau),\theta(\tau) \otimes v^4\overline{\Theta_{P,4}(\tau)}\rangle d\mu(\tau) \\
&= \frac{1}{12^4}
\int_{\SL_2(\Z)\backslash \H}^{\reg}\langle f_m(\tau)\otimes \mathcal{F}(\tau),\Theta_{L,3}(\tau,z_U)\rangle d\mu(\tau) = \frac{1}{12^4}
\Phi_{L,3}(f_m\otimes \mathcal{F},z_U).
\end{align*}
Hence it remains to evaluate the Borcherds-Shimura theta lift $\Phi_{L,3}$ of $f_m\otimes \mathcal{F}$ at $z_U$. The advantage of the lattice $L$ is that the theta lift can be viewed as a meromorphic modular form of weight $6$ for $\Gamma_0(3)$ whose Fourier expansion for $y \gg 0$ large enough takes the simple form (see \cite{borcherds}, Theorem 14.3)
\begin{align}\label{theta lift fourier expansion}
\frac{1}{2}(-6i)^{-3}\Phi_{L,3}(f,z) = \sum_{n = 1}^\infty n^{3}\sum_{d \mid n}d^{-3}c_f(d^2/12, d)q^n,
\end{align}
where we identified $L'/L \cong \Z/6\Z$.

\subsection{Evaluation of theta lifts}

Let us now compute the inner product $\langle f_{-1/3},\Theta_{P,4}\rangle_{\Pet}$. It is given by a special value of the Borcherds-Shimura lift of the cusp form
\begin{align*}
f_{-1/3}(\tau) \otimes \mathcal{F}(\tau) &= \frac{1}{2}\bigg((\eta(\tau)^8 \mathcal{F}_1(\tau/4)(\phi_1-\phi_{5}) - \eta(\tau)^8 \mathcal{F}_0(\tau/4)(\phi_2-\phi_4)\bigg) \\
&=\frac{1}{2}\bigg((q^{\frac{1}{12}}  - 72q^{\frac{13}{12}} + 19q^{\frac{25}{12}} + \dots)(\phi_1-\phi_{5}) \\
&\qquad \quad -(10q^{\frac{4}{12}} + 28q^{\frac{16}{12}} + 144q^{\frac{28}{12}}+\dots)(\phi_2-\phi_{4})\bigg) \in S_{7/2,L}.
\end{align*}
Hence, by \eqref{theta lift fourier expansion} its Borcherds-Shimura lift has the Fourier expansion
\[
\frac{1}{2}(-6i)^{-3}\Phi_{L,3}(f_{-1/3} \otimes \mathcal{F},z) = \frac{1}{2}(q-6q^2+9q^3+4q^4+6q^5+\dots) = \frac{1}{2}\eta(z)^6\eta(3z)^6
\]
where the second identity follows from the fact that the Shimura lift is a cusp form of weight $6$ for $\Gamma_0(3)$, and this space is one-dimensional and spanned by $\eta(z)^6\eta(3z)^6$. We evaluate this cusp form at $z_U = \frac{3+i\sqrt{3}}{6}$ and obtain
\[
\eta(z_U)^6\eta(3z_U)^6 = -0.36019264\dots = -3\sqrt{3}\Omega_{-3}^6
\]
where
\[
\Omega_{-3} = \frac{1}{\sqrt{6\pi}}\left(\frac{\Gamma(1/3)}{\Gamma(2/3)}\right)^{\frac{3}{2}} = 0.64092738\dots
\]
is the Chowla-Selberg period of $\Q(\sqrt{-3})$. Taking everything together, we obtain the evaluation
\[
\langle f_{-1/3},\Theta_{P,4}\rangle_{\Pet} = \frac{1}{12^4}\Phi_{L,3}(f_{-1/3} \otimes \mathcal{F},z_U)=  -\frac{\sqrt{3}i}{32}\Omega_{-3}^6.
\]
Recalling the relation $\Theta_{P,4} = \frac{2i}{\sqrt{3}}f_{-1/3}$, this also yields the Petersson norm
\begin{equation}
  \label{eq:P4norm}
\|\Theta_{P,4}\|_{\Pet}^2 = \langle\Theta_{P,4},\Theta_{P,4}\rangle_{\Pet} = \frac{2i}{\sqrt{3}}\langle f_{-1/3},\Theta_{P,4}\rangle_{\Pet} =  \frac{1}{16}\Omega_{-3}^6.  
\end{equation}

Let us now compute the inner product $\langle f_{2/3},\Theta_{P,4} \rangle_{\Pet}$, which is more involved. Using the above Fourier expansion of $\mathcal{F}(\tau)$, we find
\begin{align*}
f_{2/3}(\tau)\otimes \mathcal{F}(\tau) &= \frac{1}{2}(q^{-\frac{11}{12}} - 64q^{\frac{1}{12}} + 196327q^{\frac{13}{12}} + 7318336q^\frac{25}{12} + \dots)(\phi_1-\phi_{5}) \\
&\quad-\frac{1}{2}(10q^{-\frac{8}{12}} + 108q^{\frac{4}{12}} + 1969208q^{\frac{16}{12}} + 220451216q^{\frac{28}{12}}+\dots)(\phi_2-\phi_{4})
\end{align*}
from which we can compute the Fourier expansion of the Borcherds-Shimura lift
\begin{align*}
\frac{1}{2}(-6i)^{-3}\Phi_{L,3}(f_{2/3}\otimes \mathcal{F},z) = -32q - 182q^2 - 288q^3 + 983876q^4 - 3659968q^5 + \dots
\end{align*}
Unfortunately, this Fourier expansion is only valid for $\Re(z) \gg 0$ large enough, so we cannot just plug in $z_U$ into this expansion. Hence, we want to write the lift as a rational function in weakly holomorphic modular forms. The Borcherds-Shimura lift is a meromorphic modular form of weight $6$ for $\Gamma_0(3)$ with poles of order $3$ at the CM points in $\mathcal{Q}_{3,-11,\pm 1}$ and $\mathcal{Q}_{3,-8,\pm 2}$. The latter fact can be read off from the exponents in the principal part of $f_{2/3}(\tau)\otimes \mathcal{F}(\tau)$. Since $-8$ and $-11$ have class number $1$, it follows from Proposition~I.1 in \cite{gkz} that the corresponding binary quadratic forms are given by
\[
\mathcal{Q}_{3,-11,\pm 1}/\Gamma_0(3) = \{[3,1,1], [3,-1,1]\}, \quad \mathcal{Q}_{3,-8,\pm 2}/\Gamma_0(3) = \{[3,2,1],[3,-2,1]\}.
\]

The weakly holomorphic modular form 
\[
j_3(z) = \frac{\eta(z)^{12}}{\eta(3z)^{12}} = q^{-1}- 12 + 54q - 76q^2 - 243q^3 + 1188q^4 - 1384q^5 - 2916q^6+\dots
\]
is a Hauptmodul for $\Gamma_0(3)$, and its values at the CM points corresponding to the above quadratic forms are given by
\[
j_3(z_{[3,\pm 1,1]}) = 5\pm 8\sqrt{11}i,  \quad j_3(z_{[3,\pm 2,1]}) = -23 \pm 5\sqrt{8}i.
\]

Now, if we divide the Borcherds-Shimura lift $\Phi_{L,3}(f_{2/3}\otimes \mathcal{F},z)$ by the cusp form $\eta(\tau)^6 \eta(3\tau)^6 \in S_{6}(3)$ and multiply it by the third powers of $j_3(z)-j_3(\alpha)$ for the four CM points $\alpha$ above, we obtain a weakly holomorphic modular form of weight $0$ for $\Gamma_0(3)$, which can hence be written as as a polynomial in the Hauptmodul $j_3(z)$. The necessary computation is tedious to do by hand, but easily done in a computer algebra system. The result is that the Shimura lift may be written as
\[
\frac{1}{2}(-6i)^{-3}\Phi_{L,3}(f_{2/3}\otimes \mathcal{F},z)  = \frac{1}{2}\eta(z)^6 \eta(3z)^6 \frac{A(j_3(z))}{B(j_3(z))}
\] 
with the degree $12$ polynomials
\begin{align*}
A(X) &= -64X^{12} - 7660X^{11} - 516744X^{10} - 24576796X^9 - 987878688X^8  \\
&\quad - 31994374680X^7 - 884044074800X^6 - 23323899141720X^5  \\
&\quad - 524999237829408X^4 - 9521554324373244X^3 - 145943768399337864X^2 \\
&\quad - 1577126071845011340X - 9606056659007943744
\end{align*}
and
\begin{align*}
B(X) &= (X^2 - 10X + 729)^3 (X^2 + 46X + 729)^3.
\end{align*}
We remark that $A(X)$ is irreducible over $\Q$ and $A(X),B(X)$ are coprime. Now we may plug in the CM point $z_U$ and find, using the evaluation 
\[
j_3(z_U) = -27,
\]
that
\[
\frac{1}{2}(-6i)^{-3}\Phi_{L,3}(f_{2/3}\otimes \mathcal{F},z_U) = \frac{183\sqrt{3}}{2}\Omega_{-3}^6.
\]
Taking everything together, we obtain the evaluation
\[
\langle f_{2/3},\Theta_{P,4}\rangle_{\Pet} = \frac{1}{12^4}\Phi_{L,3}(f_{2/3} \otimes \mathcal{F},z_U) =  \frac{61\sqrt{3}i}{32}\Omega_{-3}^6.
\]

We can compute some more coefficients in this way, and after dividing by $\|\Theta_{P,4}\|_{\Pet}^2 = \frac{1}{16}\Omega_{-3}^6$ we obtain the holomorphic part
\begin{align*}
\tilde{\Theta}_{P,4}^+(z) =& -\frac{\sqrt{3}i}{2} (\phi_{1}-\phi_{-1}) \\
& \times \left(q^{-\frac{1}{3}} -61q^{\frac{2}{3}} - \frac{65804}{5^3}q^{\frac{5}{3}}- \frac{1566912}{8^3}q^{\frac{8}{3}}-\frac{19145526}{11^3}q^{\frac{11}{3}}-\frac{159360544}{14^3}q^{\frac{14}{3}}\dots\right)
\end{align*}
of a harmonic Maass form which maps to $\Theta_{P,4}/\|\Theta_{P,4}\|_{\Pet}^{2}$ under $\xi_{-2}$.

This also yields a $\xi$-preimage of the CM newform $g=\eta(3z)^8$. We easily find that 
\begin{align}\label{our holomorphic part}
\tilde{g}^+(z) = \frac{1}{4}\left(q^{-1} -61q^{2} - \frac{65804}{5^3}q^{5}- \frac{1566912}{8^3}q^{8}-\frac{19145526}{11^3}q^{11}-\frac{159360544}{14^3}q^{14}\dots\right)
\end{align}
is the holomorphic part of a harmonic Maass form which maps to $g/\|g\|_{\Pet}^{2}$ under $\xi_{-2}$. We would like to compare this to the holomorphic part
\begin{align}\label{bor holomorphic part}
q^{-1} -\frac{1}{4}q^2 + \frac{49}{5^3}q^5 - \frac{48}{8^3}q^8 - \frac{771}{11^3}q^{11}+ \frac{2744}{14^3}q^{14}+\dots
\end{align}
of a $\xi$-preimage of $g/\|g\|_{\Pet}^{2}$ constructed by Bruinier, Ono, and Rhoades in \cite{bruinieronorhoades}. The difference between the two harmonic Maass forms (i.e. the difference between the two holomorphic parts in \eqref{bor holomorphic part} and \eqref{our holomorphic part}) is given by
\[
\frac{3}{4}\left(q^{-1}+20q^2 + 176q^5 + 1020 q^8 + 4794q^{11} + 19360q^{14}+\dots\right).
\]
Indeed, we have the weight $-2$ weakly holomorphic modular form of level $9$,
\begin{align*}
&\frac{\eta(3z)^2}{\eta(z)^3 \eta(9z)^3}-3\frac{\eta(z)^3\eta(9z)^3}{\eta(3z)^{10}}-18\frac{\eta(9z)^6}{\eta(3z)^{10}}  \\
&\quad = q^{-1}+20q^2 + 176q^5 + 1020 q^8 + 4794q^{11} + 19360q^{14}+\dots
\end{align*}
The reason that our $\xi$-preimage differs from the one constructed in \cite{bruinieronorhoades} by a weakly holomorphic modular form is the fact that the components of vector-valued modular forms typically have non-vanishing principal parts at many different cusps, so they are not \emph{good} in the sense of \cite{bruinieronorhoades}. Hence, to obtain a good form, one needs to subtract a suitable weakly holomorphic modular form from our holomorphic part.

\bibliography{references}
\bibliographystyle{alpha}

\end{document}